\newtheorem{thm}{Theorem}
\newtheorem{rem}[thm]{Remark}
\newtheorem{cor}[thm]{Corollary}
\newtheorem{prop}[thm]{Proposition}
\renewcommand{\geq}{\geqslant}
\renewcommand{\leq}{\leqslant}
\begin{document}

\title{{\bf Green functions
for killed random walks\\in the Weyl chamber of Sp{\boldmath$(4)$}}
}

\author{{\large Kilian Raschel\footnote{Laboratoire de
         Probabilit\'es et Mod\`eles Al\'eatoires, Universit\'e
         Pierre et Marie Curie, 4 Place Jussieu, 75252 Paris Cedex 05,
         France. E-mail: \texttt{kilian.raschel@upmc.fr}}}}

\date{\today}

\maketitle

\begin{abstract}
     We consider a family of random walks killed at the boundary
     of the Weyl chamber of the dual of $\rm{Sp}(4)$, which in
     addition satisfies the following property: for any $n\geq 3$,
     there is in this family a walk associated with a reflection
     group of order $2n$. Moreover, the case $n=4$ corresponds to
     a process which appears naturally by studying quantum random
     walks on the dual of $\rm{Sp}(4)$. For all the processes 
     belonging to this family, we find the exact asymptotic of 
     the Green functions along all infinite paths of states as 
     well as that of the absorption probabilities along the boundaries.
\end{abstract}
\selectlanguage{francais}
\begin{abstract}
Dans cet article, nous considérons une famille de marches aléatoires tuées au bord de
la chambre de Weyl du dual de $\rm{Sp}(4)$, qui vérifie en outre
la propriété suivante: pour tout $n\geq3$, il y a, dans cette famille,
une marche ayant un groupe de réflexions d'ordre $2n$. De plus, le
cas $n=4$ correspond à un processus  bien connu apparaissant 
lors de l'étude des marches aléatoires quantiques sur le dual de $\rm{Sp}(4)$. 
Pour tous les processus de cette famille, nous trouvons l'asymptotique
exacte des fonctions de Green selon toutes les trajectoires, ainsi
que l'asymptotique des probabilités d'absorption sur le bord.  
\end{abstract}
\selectlanguage{english}

\noindent {\it Keywords: killed random walk, Green functions, Martin boundary,
absorption probabilities}

\vspace{1mm}

\noindent {\it AMS $2000$ Subject Classification: primary 60G50, 31C35;
secondary 30E20, 30F10}

\section{Introduction and main results}
\label{sp4-Intro}

Appearing in several distinct domains, random walks conditioned on
staying in cones of $\mathbb{Z}^{d}$ attract more and more attention
from the mathematical community. Historically, important examples
are  the so-called non-colliding random walks. These
are the processes $(Z_{1}, \ldots, Z_{d})$ composed of $d$
independent and identically distributed random walks conditioned on
never leaving the Weyl chamber $\{z\in \mathbb{R}^{d} : z_{1}<\cdots
<z_{d} \}$. They first appeared in the eigenvalues description of
important matrix-valued stochastic processes, see~\cite{Dy62}, and
are recently again very much studied, see~\cite{KO,EJP2010-11,KoS} and the
references therein. Another important area where  processes
conditioned on never leaving cones of $\mathbb{Z}^{d}$
appear is that of quantum random walks, see e.g.\ \cite{Bi1,Bi3}.

A usual way to condition random processes on staying in cones consists in
using Doob $h$-transforms.
These are functions which are harmonic, positive
inside of the cone and equal to zero on its boundary---or equivalently
harmonic and positive for the underlying killed processes.
It is therefore natural to be interested in finding all positive
harmonic functions for processes in cones of ${\mathbb Z}^d$ killed at
the boundary, and more generally to compute the Martin
compactification of such processes, that can e.g.\ be obtained from
the exact asymptotic of the Green functions.

We briefly recall \cite{Dynkin} that for a transient Markov chain with state space $E$, the {Martin compactification} of $E$ is the
smallest compactification $\widehat{E}$ of $E$ for which the Martin kernels $y\mapsto k_{y}^{x}=
G_{y}^{x}/G_{y}^{x_0}$ extend continuously---by $G_{y}^{x}$ we mean the {Green functions}
and we denote\vspace{-0.8mm} by $x_0$ a reference state. $\widehat{E}\setminus E$ is usually called the {full Martin boundary}.
For  $\alpha\in\widehat{E}$, $x\mapsto k_{\alpha}^{x}$ is clearly superharmonic; then $\partial_{m}E=\{\alpha \in \widehat{E}\setminus E :
x\mapsto k_{\alpha}^{x}\text{ is minimal harmonic}\}$ is called the
{minimal Martin boundary}---a harmonic function $h$ is said minimal if $0\leq \widetilde{h}\leq h$
with $\widetilde{h}$ harmonic implies $\widetilde{h}=c h$ for some constant $c$. Then, every superharmonic
(resp.\ harmonic) function $h$ can be written as $h(x)=\int_{\widehat{E}}k_{y}^{x}\mu(\text{d}y)$
(resp.\ $h(x)=\int_{\partial_{m}E}k_{y}^{x}\mu(\text{d}y)$), where $\mu$ is some finite
measure, uniquely characterized in the second case above.

\medskip

In this context, the case of walks in cones of $\mathbb{Z}^d$
spatially homogeneous in the interior, with \emph{non-zero drift} and
killed at the boundary has held a great and fruitful deal of
attention.


For random walks on weight lattices in Weyl chambers of Lie groups,
Biane \cite{Bi3} finds the minimal
Martin boundary
 thanks to Choquet-Deny theory.
   Collins \cite{COL} obtains their Martin compactification.

In \cite{KR} we give a more detailed analysis for a certain class of walks in
dimension $d=2$.
  These are the random walks killed at the boundary of $\mathbb{Z}_{+}^{2}$, with
non-zero jump probabilities to
the eight nearest neighbors and having in addition a positive mean drift. The
asymptotic of the Green functions along all infinite paths of
states as well as that  of the  probabilities of absorption
along the axes are computed for the walks in this class. However, the
methods~of complex analysis used in \cite{KR} apply in dimension $d=2$ only.

   Ignatiouk-Robert \cite{II,III},  then Ignatiouk-Robert and Loree \cite{IL09}
   find the Martin compac- tification of the random walks
 in $\mathbb{Z}_+\times \mathbb{Z}^{d-1}$ and $\mathbb{Z}_{+}^{d}$
  ($d\geq 2$), with non-zero drift and killed
 at the boundary. They make very general assumptions on the
 jump probabilities.
   The~approach used there,  based on large deviation
   techniques and Harnack inequalities, seems not to be powerful for studying
 the asymptotic of the Green functions.
     Furthermore, having a~non-zero drift is an
essential hypothesis in \cite{II,IL09,III}. Last but not least, the
results of \cite{III} in the case $d \geq 3$ are conditioned by the
fact of been able: ``to identify the positive harmonic functions of a
random walk on $\mathbb{Z}^{d}$ which has zero mean and is killed at
the first exit from $\mathbb{Z}_{+}^{d}$; unfortunately, for $d \geq
2$, there are no general results in this domain'' (see~page~5
of~\cite{III}).

\medskip

As may this open problem suggest, the results and
methods dealing with  the asymptotic of Green functions or even with
the Martin compactifiction for random walks in domains of
$\mathbb{Z}^{d}$ with \emph{drift zero} and killed at the boundary
are actually scarce, even for $d=2$.

  In \cite{LL2} (resp.\ \cite{Lawlerloop}), approximations of the Green functions for simple
  random walks killed at the boundary of balls of $\mathbb{Z}^{d}$ (resp.\ of certain more general
  sets of $\mathbb{Z}^{2}$)
are computed by comparison with Brownian motion.
      Namely, the Green
functions $G(B)_{y}^{x}$ of Brownian motion (resp.\ %
random walk) killed at the boundary $B$ are related to the potential kernels
(or the Green functions if $d\geq 3$) $a_{y}^{x}$ of the
non-killed Brownian motion (resp.\ random walk), e.g.\ via the
``balayage formula'', see Chapter~4 of \cite{LL2}. These identities
have the same form for Brownian motion and random walk \cite{LL2}:
     \begin{equation*}
     \label{bmrw}
          G(B)_{y}^{x}=-a_{y}^{x}+\mathbb{E}_{x}\big[a_{y}^{S_{\tau_{B}}}\big]+F(B)^{x},
     \end{equation*}
where $S$ is the process, $\tau_B$ is the hitting time of the boundary $B$ and where the rest
$F(B)^x$ can be expressed in terms of $\tau_B$ and $x$, indeed see Chapter 4 of
\cite{LL2}. They are then compared term by term.~For the comparison of
potential kernels $a_y^x$
of the non-killed processes, classical formulas may be used, like that stated in
Chapter~4 of \cite{LL2}; for the comparison of positions of these
processes at time of absorption, strong approximations of random
walks by Brownian motion \cite{Komlos1,Komlos2} as well as Beurling
estimates \cite{Kestenn,LL} are usually exploited.

This approach in particular requires  rather precise estimates of the hitting time
$\tau_B$.
  It seems therefore difficult to use  for models of walks
   in cones where $\tau_B$ is painful to analyze. This is for example the case
    of the random walks in the half-plane considered in \cite{EJP2010-37,UchiyamaPreprint}
     and of the random walks in the quarter-plane
       we shall study in this paper
     (for which Chapter~F of \cite{mythesis} illustrates the complexity of hitting
     times).
   Remark \ref{rwMB} in Subsection~\ref{sub_harmo_bm} specifies~other reasons why this method
   via comparison with Brownian motion seems not to lead to enough
     satisfactory results
     in the analysis of the random walks
  we shall consider here.

For random walks in the half-plane, Uchiyama
\cite{EJP2010-37,UchiyamaPreprint} finds the asymptotic of the Green
functions from their trigonometric representations.

If the domain is a {quarter-plane},  the simplest case is the
cartesian product of two killed one-dimensional simple random walks with
mean zero. The Martin boundary then happens to be trivial \cite{PW}. 
Moreover, the exact asymptotic of the Green functions for these
processes is computed in Chapter~D of \cite{mythesis}.

From a Lie group theory point of view, the previous case corresponds
to the group product SU$(2)\times $SU$(2)$, which is associated with
a \emph{reducible} rank-$2$ root system, see~\cite{Bou2}. We are
then interested in the classical random walks that can be obtained
from the construction made by  Biane in~\cite{Bi3}---namely, by
restriction of the quantum walks, initially
defined on non-commutative von Neumann algebras, to commutative
subalgebras---starting from  Lie groups
associated with \emph{irreducible} rank-$2$ roots systems, namely
SU$(3)$ and Sp$(4)$.
   These are random walks on the lattices of Figure~\ref{sp4-Lie},
  spatially homogeneous in the interior and absorbed at the boundary.
  The one on the left, associated with SU$(3)$, has three jump probabilities
  equal to $1/3$;
  that on the right, related to Sp$(4)$, has the same four jump probabilities $1/4$.

\begin{figure}[!ht]
 \begin{picture}(10.00,60.00)
 \hspace{20mm}
 \includegraphics{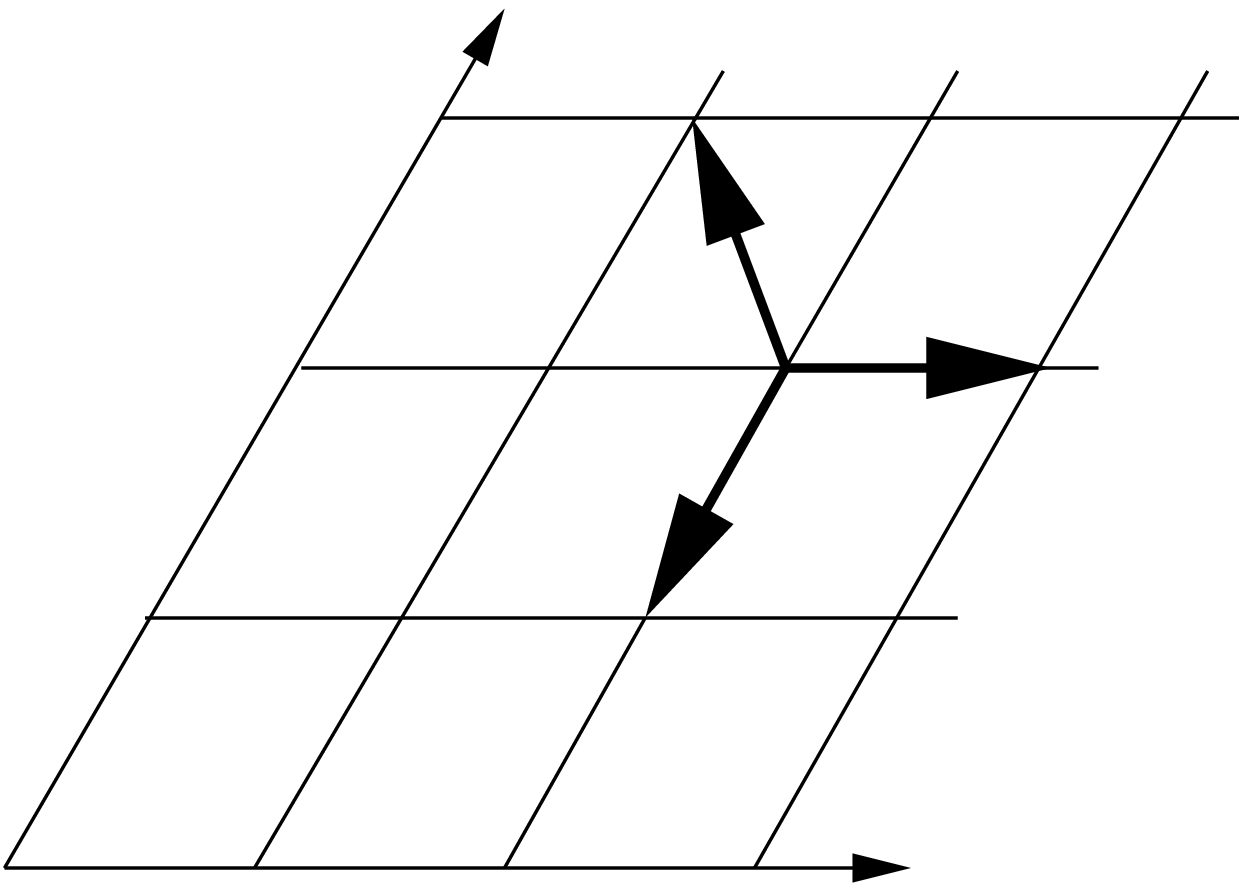}
 \hspace{70mm}
 \includegraphics{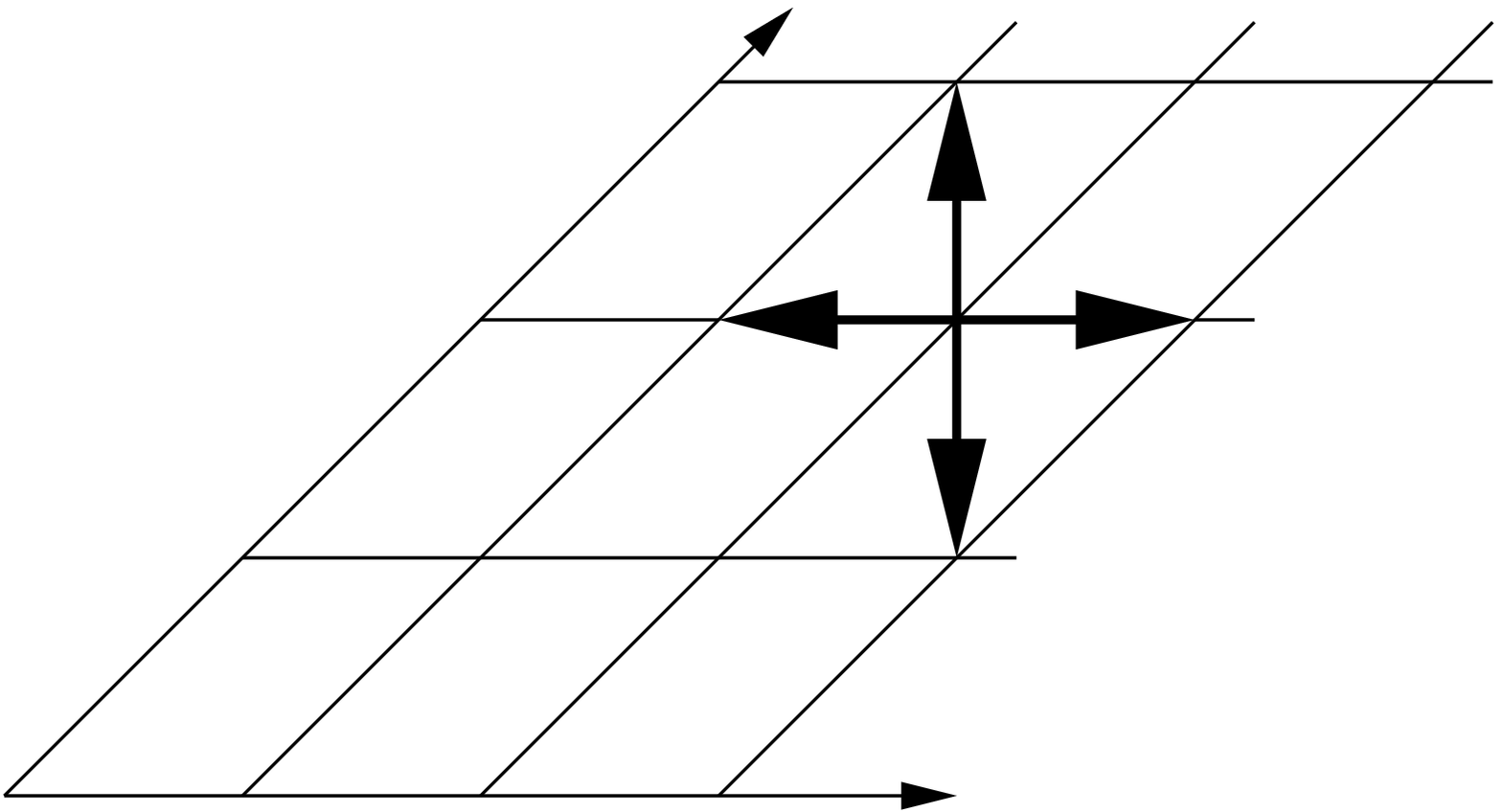}
 \end{picture}
 \caption{Random walks in the Weyl chamber of the duals of
 SU$(3)$ and Sp$(4)$}
 \label{sp4-Lie}
 \end{figure}

   By obvious transformations
   of these lattices it is immediate that
  both are killed nearest neighbors  random walks in the quarter-plane ${\mathbb Z}_+^2$
  as below.

 \begin{figure}[!ht]
 \begin{picture}(10.00,70.00)
 \hspace{20mm}
 \includegraphics{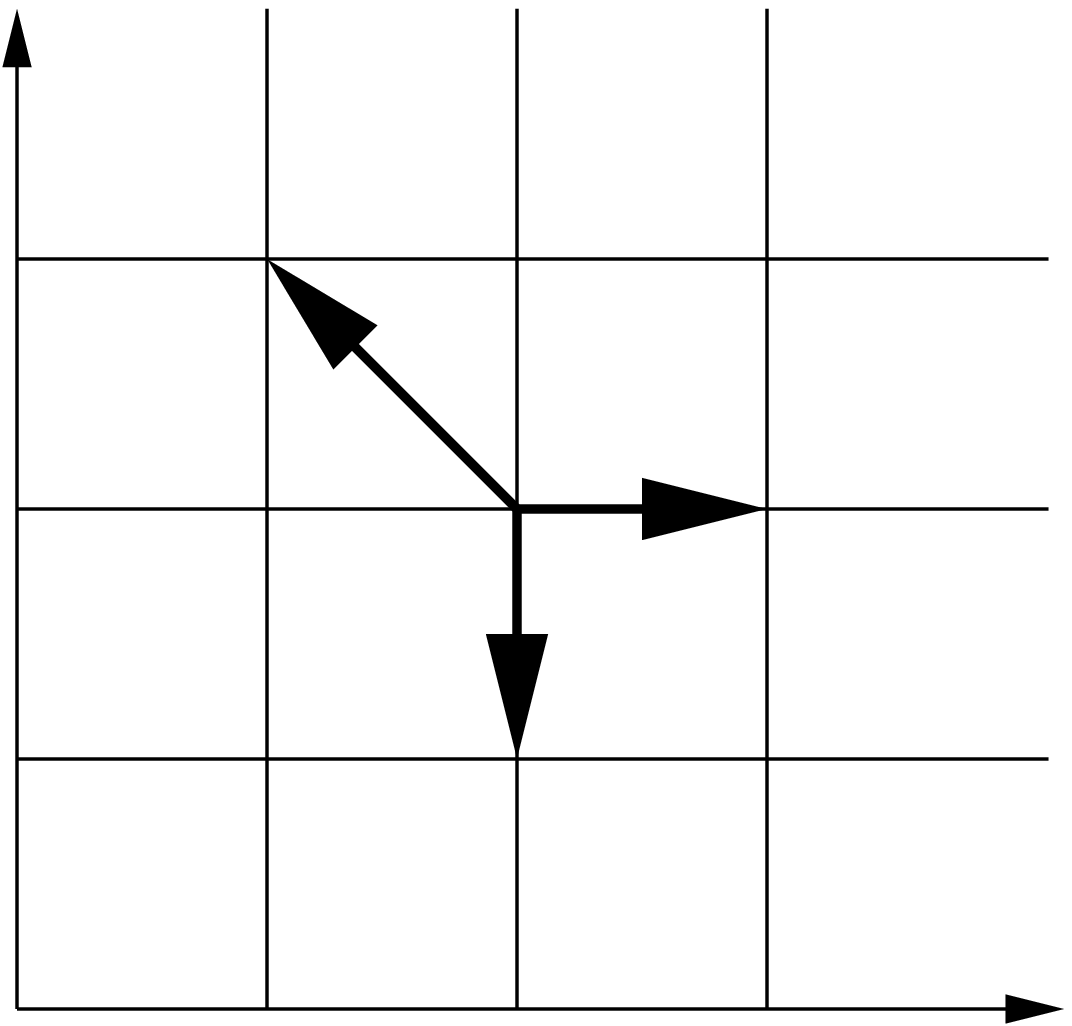}
 \hspace{70mm}
 \includegraphics{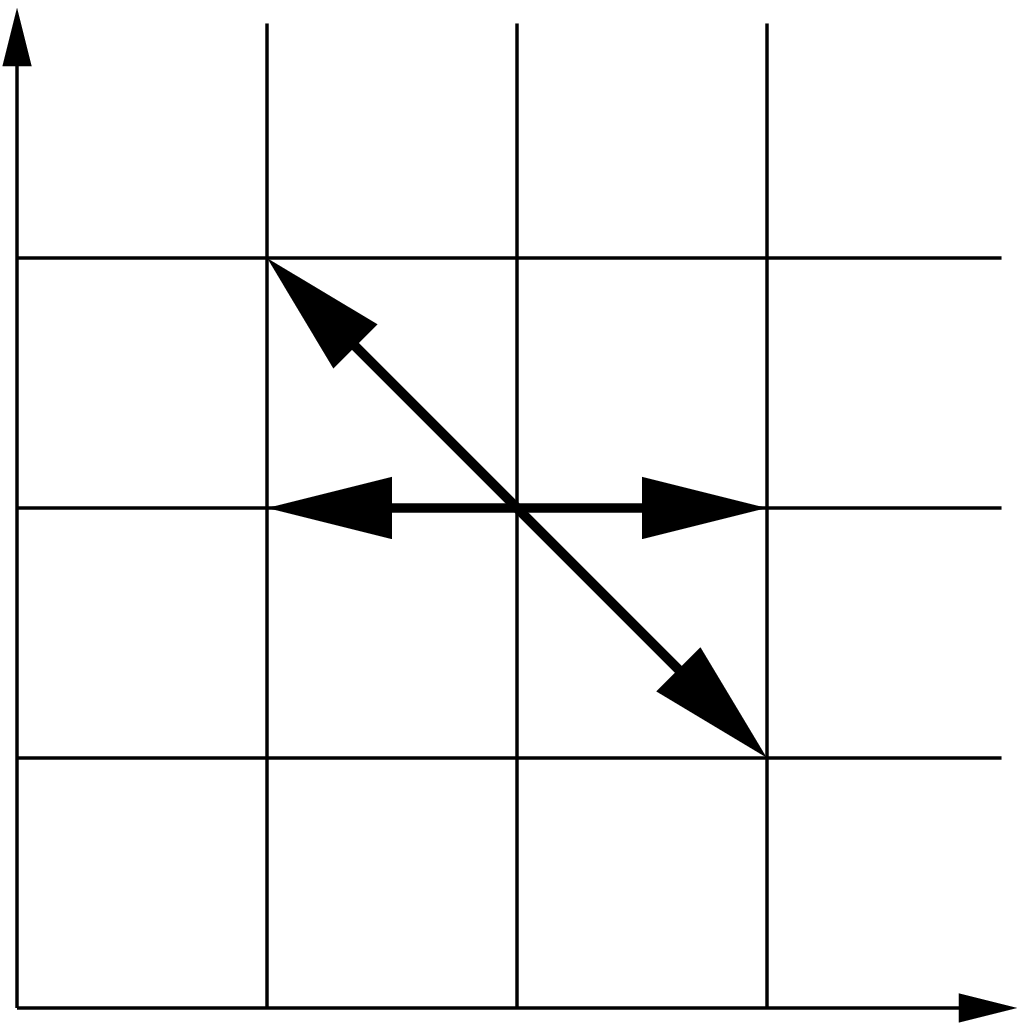}
 \end{picture}
 \caption{Random walks of Figure \ref{sp4-Lie} can be viewed as random walks in $\mathbb{Z}_{+}^{2}$}
 \label{sp4-view}
 \end{figure}

     For the random walk associated with  SU$(3)$ (and besides also
     for its multi-dimensional analogues SU$(d)$),
   Biane \cite{Bi1} computes the asymptotic of the Green functions
 along all paths of states {\it except for the ones approaching the axes}.
   To complete the latter results for these particular paths,
    the complex analysis methods  of \cite{sl3} recently turned out to be
   fruitful.
       Furthermore, in \cite{sl3} we compute the exact asymptotic
     of the Green functions along all paths and also the asymptotic of the
     absorption probabilities for a whole class of nearest neighbors random walks
     killed at the boundary of $\mathbb{Z}_{+}^2$.
       This class, including the walk associated with SU$(3)$, is characterized by the fact
         that each of its elements has a harmonic function of order three,
      i.e.\ $(i_0,j_0)\mapsto i_0 j_0(i_0+\alpha j_0+\beta)$ for some arbitrary $\alpha$ and $\beta$.
      Similar results for the simpler class of walks
      admitting the harmonic function of order two $(i_0,j_0)\mapsto i_0 j_0$
      are derived in~\cite{mythesis}.

   The methods of \cite{sl3,mythesis} heavily rely on the analytic approach
   for the random walks in the quarter-plane developed by
   Fayolle, Iasnogorodski and Malyshev \cite{FIM}. An important notion involved
   there is that of the \emph{group of the random walk},
   which is a group of automorphisms of an algebraic
   curve; it is here properly defined in Subsection \ref{cons}.
   In this context our previous works \cite{sl3,mythesis} treat the classes of walks
   with groups of the smallest orders four and six.

   The main subject of this paper is to obtain, by this approach, the exact asymptotic of
   the Green functions along all paths as well as that of the absorption
   probabilities along the axes
   for a certain class of random walks in
   $\mathbb{Z}_+^2$ absorbed at the axes,
    with zero
   drift and containing the walk associated with Sp$(4)$---drawn on the right of Figure \ref{sp4-view}.
   This class will include walks with groups of all finite orders $2n$ for $n\geq 3$.
     We now define this class and specify the notion of group of the random walk.




\subsection{Random walks under consideration and group of the walk}
\label{cons}

Consider the random walk $(X(k),Y(k))_{k\geq 0}$ spatially
homogeneous inside of the quarter-plane $\mathbb{Z}_{+}^{2}$ and
such that if
$p_{i,j}=\mathbb{P}[(X(k+1),Y(k+1))=(i_{0}+i,j_{0}+j) \mid
(X(k),Y(k))= (i_{0},j_{0})]$, then:

\smallskip

\smallskip

\begin{enumerate}
     \item[\textnormal{(H1)}] {  $p_{1,0}+p_{1,-1}+p_{-1,0}+p_{-1,1}=1$,
                               $p_{1,0}=p_{-1,0}$, $p_{1,-1}=p_{-1,1}$;}

\smallskip

     \item[\textnormal{(H2)}] { $\{ (i,0):\ i\geq 1 \} \cup \{ (0,j):\ j\geq 1 \}$
                 is absorbing.}
%
\end{enumerate}

\vspace{-28.5mm}

 \begin{figure}[!ht]
 \begin{picture}(10.00,75.00)
 \hspace{125mm}
 \includegraphics{aaaXL2.eps}
 \end{picture}
 \end{figure}

\vspace{-2mm}

{\raggedright Let us also define the polynomial $Q$ (which  is just a simple
transformation of the transition probabilities generating function)
by:}
     \begin{equation}
     \label{sp4-def_Q}
          Q(x,y)= x y\big[ p_{1,0}x+p_{-1,0}/x+
          p_{1,-1}x/y+p_{-1,1}y/x  -1 \big].
     \end{equation}
     If $Q(x,y)=0$, then with~(\ref{sp4-def_Q}) it is immediate that 
$Q(\widehat{\xi}(x,y))=0$ and $Q(\widehat{\eta}(x,y))=0$, where
     \begin{equation*}
     \widehat{\xi}(x,y)=\left(x,\frac{x^{2}}{y}\right),\ \ \ \ \ 
     \widehat{\eta}(x,y)=\left(\frac{p_{-1,1}y+p_{-1,0}}{p_{1,0}y+p_{1,-1}}
     \frac{y}{x},y\right).
     \end{equation*}
     The group of the walk is then
$W=\langle\widehat{\xi},\widehat{\eta}\rangle$, the group of
automorphisms of the algebraic curve $\{(x,y)\in (\mathbb{C}\cup
\{\infty\})^{2}: Q(x,y)=0\}$
 generated by $\widehat{\xi}$ and $\widehat{\eta}$. Its order is always
 even and larger than or equal to four.
It is already known \cite{BMM} that if
$p_{1,0}=p_{-1,0}=1/4$~and~$p_{1,-1}=p_{-1,1}=1/4$ then $W$ has order
eight.

More generally, we prove in Remark \ref{sp4-assumption_parameters_necessary}
that the group $W$ is finite if and only
if there exists some rational number $r$ such that $p_{1,0}=p_{-1,0}=\sin(r\pi)^{2}/2$
and $p_{1,-1}=p_{-1,1}=\cos(r\pi)^{2}/2$.

As illustrated by the works already mentioned \cite{FIM,BMM,sl3,mythesis}
the notion of finite group is nowadays extensively studied, notably because 
it often leads to worthwhile results. Let ${\mathscr P}_{2n}$ be the class 
of random walks satisfying (H1), (H2) and with a group of order $2n$. We 
show in Subsection~\ref{sp4-Galois_automorphisms} that the random walk under 
hypotheses (H1), (H2) and (H3), where
\begin{description}
\item[\textnormal{(H3)}]
$p_{1,0}=p_{-1,0}=\sin(\pi/n)^{2}/2$ {and}
$p_{1,-1}=p_{-1,1}=\cos(\pi/n)^{2}/2$,
\end{description}
belongs to ${\mathscr P}_{2n}$.

  {\it In this article we study the class being made up of the union for $n\geq 3$ of the random walks
satisfying (H1), (H2) and (H3).
  This class contains one---and only one---representative of ${\mathscr
  P}_{2n}$ for any $n\geq 3$. Precisely, for all walks in this class we
  compute the exact asymptotic of the Green functions along all paths
  and that of the absorption probabilities along the axes. This is the first
   result of that kind for random walks with zero drift
    and groups of all finite orders, up to our knowledge.}

  In the particular case $n=3$,  the process coincides with that represented
   on the left of Figure~2 in \cite{sl3} for $\alpha=2$.
  In the case $n=4$, this is the walk in the Weyl chamber of Sp$(4)$
   with jump probabilities $1/4$ studied by Biane \cite{Bi3}, see Figures \ref{sp4-Lie} and \ref{sp4-view}.

The hypothesis that $n$ is integer is technical.
Indeed, independently of this assumption, the approach \cite{FIM}---we shall use here---always yields explicit
expressions for the Green functions,
notably in terms of solutions to boundary value problems of Riemann-Hilbert type.
In the general case, these
formulations are so complex that we are not able to obtain their
asymptotic. However, they may admit a nice simplification as a closed
expression (and then in terms of the orbit under the group of a simple
function); this actually happens if the walk admits a finite
group, and if in addition some technical assumption
holds---related to fundamental domains, see Subsection \ref{sp4-Galois_automorphisms}. As it
will be properly showed in
Remark~\ref{sp4-assumption_parameters_necessary}, in the case of the
walks satisfying to (H1) this exactly implies (H3).



\subsection{Main results}
Here and throughout, $(X,Y)=(X(k),Y(k))_{k \geq 0}$
denotes the process defined by (H1), (H2) and (H3). Our first result
deals with the asymptotic of the Green functions, properly
defined by
     \begin{equation}
     \label{sp4-definition_Green_functions}
          G^{i_{0},j_{0}}_{i,j} = \mathbb{E}_{\left(i_{0} , j_{0}\right)}
          \Bigg[ \sum_{k\geq 0}\textbf{1}_{\left\{
          \left(X\left(k\right),Y\left(k\right)\right)=
          \left(i,j \right) \right\}}\Bigg].
     \end{equation}
Let $f_{n}$ be the function defined in~(\ref{sp4-def_f_n}); in
Section~\ref{sp4-Harmonic_functions} we shall write it explicitly
and we shall prove that it is harmonic  for
$(X,Y)$, positive inside of $\mathbb{Z}_{+}^{2}$
and equal to zero on the boundary.
\begin{thm}
\label{sp4-Main_theorem_Green_functions}
The Green functions~(\ref{sp4-definition_Green_functions}) admit the following
asymptotic as $i+j\to \infty$ and $j/i\to \tan(\gamma)$,
$\gamma\in[0,\pi/2]$:
     \begin{equation}
     \label{sp4-main_theorem_Green_functions}
          G_{i,j}^{i_{0},j_{0}} \sim
          \frac{2}{\pi} \frac{\left(n-1\right)!}
          {4^{n}\sin\left(2\pi/n\right)}
          f_{n}\left(i_{0},j_{0}\right)
          \frac{
          \sin\big(n\arctan\big[\frac{j/i}{1+j/i}
          \tan(\pi/n)\big]\big)
          }
          {
          \big[\cos(\pi/n)^{2}\left(i^{2}
          +2 i j\right)+j^{2}\big]^{n/2}
          }.
     \end{equation}
\end{thm}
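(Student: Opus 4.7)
The plan is to follow the analytic approach of Fayolle--Iasnogorodski--Malyshev already used in \cite{sl3,mythesis}, pushing it through for arbitrary group order $2n$. First, I would set up the double generating function
$$G^{i_{0},j_{0}}(x,y):=\sum_{i,j\geq 1} G_{i,j}^{i_{0},j_{0}}\,x^{i-1}y^{j-1}.$$
Writing the Green identity $G_{i,j}^{i_{0},j_{0}}=\mathbf{1}_{(i,j)=(i_{0},j_{0})}+\sum_{k,\ell}p_{k,\ell}G_{i-k,j-\ell}^{i_{0},j_{0}}$ at every interior $(i,j)$, and keeping track of the absorbing boundary, yields the fundamental kernel equation
$$Q(x,y)\,G^{i_{0},j_{0}}(x,y)=q_{1}(x,y)A(x)+q_{2}(x,y)B(y)-x^{i_{0}}y^{j_{0}},$$
where $Q$ is the polynomial in (\ref{sp4-def_Q}) and $A$, $B$ encode the unknown boundary absorption generating functions along the two axes.

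The second step is to exploit the finite group $W=\langle\widehat{\xi},\widehat{\eta}\rangle$ of order $2n$ provided by (H3). Since $Q$ is invariant under $W$, evaluating the kernel equation at the $2n$ images of $(x,y)$ on the curve $\{Q=0\}$ and forming the alternating sum over the orbit cancels $A(x)$ and $B(y)$. This yields a closed form for $G^{i_{0},j_{0}}(x,y)$ as an orbit sum of the monomial $x^{i_{0}}y^{j_{0}}$; this is the direct extension of the cases $n=3,4$ treated in \cite{sl3,mythesis}, and it is precisely the simplification flagged after Remark~\ref{sp4-assumption_parameters_necessary}. I would then identify an explicit conformal uniformization $\varphi\mapsto(x(\varphi),y(\varphi))$ of the elliptic/rational curve $\{Q=0\}$ in which the group $W$ acts as the rotations of order $2n$; on the uniformizing variable the orbit sum becomes a finite trigonometric sum, and the harmonic function $f_{n}$ of Section~\ref{sp4-Harmonic_functions} appears as the coefficient attached to $(i_{0},j_{0})$.

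The third step is the asymptotic extraction. Writing
$$G_{i,j}^{i_{0},j_{0}}=\frac{1}{(2i\pi)^{2}}\oint\oint G^{i_{0},j_{0}}(x,y)\,\frac{\mathrm{d}x\,\mathrm{d}y}{x^{i}y^{j}},$$
collapsing the double integral to a single contour on the uniformization, and then applying a steepest-descent/saddle-point analysis as $i+j\to\infty$ with $j/i\to\tan\gamma$, I expect the dominant contribution to localize near the double point $(1,1)$ of $Q$ (this is where all saddles coalesce because the drift is zero). Reparametrizing in a local coordinate tailored to the direction $\gamma$, the $2n$ orbit terms collide at $(1,1)$ and the leading non-vanishing contribution is extracted only at order $n$ in the Taylor expansion: this produces the denominator $[\cos(\pi/n)^{2}(i^{2}+2ij)+j^{2}]^{n/2}$ together with the multiple-angle numerator $\sin(n\arctan[\tfrac{j/i}{1+j/i}\tan(\pi/n)])$, the angular function being recognizable as the Poisson-type kernel associated with the $n$-fold symmetry. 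The constant $\tfrac{2}{\pi}\tfrac{(n-1)!}{4^{n}\sin(2\pi/n)}$ then falls out of a single residue/Gamma-function evaluation.

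The main obstacle will be the confluent asymptotic analysis: zero drift forces all saddle points to merge at $(1,1)$, so a naive stationary-phase computation gives zero and one must expand the orbit sum to the right order~$n$, showing that the first $n-1$ terms cancel by symmetry while the order-$n$ term reproduces the trigonometric kernel on the right-hand side of~(\ref{sp4-main_theorem_Green_functions}). Uniformity of this expansion in $\gamma\in[0,\pi/2]$ (including the axial directions $\gamma=0,\pi/2$, which are the ones where comparison with Biane's results is most delicate) is the subtle point; I would handle it by performing the saddle analysis on the uniformizing variable, where the $2n$ collapsing saddles become $2n$ equispaced points on a circle and the required cancellations are manifest.
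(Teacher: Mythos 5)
Your plan follows essentially the same route as the paper: the kernel equation, the alternating orbit sum of $x^{i_0}y^{j_0}$ over the dihedral group of order $2n$, an explicit uniformization on which $\xi,\eta$ act as $z\mapsto 1/z$ and $z\mapsto e^{-2\imath\pi/n}/z$, collapse of the double Cauchy integral to a single contour, and a confluent saddle-point analysis at the double point $(1,1)$ where the orbit sum vanishes to order exactly $n$ with $n$-th coefficient essentially $f_n$, the Gamma factor giving $(n-1)!$ and the conjugate endpoint contributions producing the multiple-angle kernel. The one step you state loosely but the paper must (and does) justify is that cancelling the boundary functions by the orbit sum requires meromorphically continuing $h$ and $\widetilde h$ over the fundamental domains (Proposition~\ref{sp4-continuation_h_h_tilde_covering}), the resulting identity holding only on the cone $\Lambda(\pi-\pi/n,\pi)$---which is precisely why the common integration contour is chosen there and why (H3), which you already invoke, is needed.
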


\begin{rem}
\label{sp4-rem_Nn} Let
$N_{n}(j/i)=\sin\big(n\arctan\big[\frac{j/i}{1+j/i}\tan(\pi/n)\big]\big)$
be the quantity appearing in the
asymptotic~(\ref{sp4-main_theorem_Green_functions}). Let also $\gamma$ be
in $[0,\pi/2]$ and suppose that $j/i$ goes to $\tan(\gamma)$.

If $\gamma\in]0,\pi/2[$, then $N_{n}(j/i)$ goes to
$N_{n}(\tan(\gamma))$, which belongs to $]0,\infty[$.

If $\gamma=0$ or $\gamma=\pi/2$, then
$N_{n}(j/i)$ goes to $0$. More precisely,
$N_{n}(j/i)= n\tan(\pi/n)[j/i+ O(j/i)^{2}]$
if $\gamma=0$ and $N_{n}(j/i)=(n\sin(2\pi/n)/2)
[i/j+ O(i/j)^{2}]$ if $\gamma=\pi/2$.
\end{rem}
Theorem \ref{sp4-Main_theorem_Green_functions} has the following immediate \cite{Dynkin}
consequence.
\begin{cor}
\label{sp4-corollary_Martin_boundary}
The Martin compactification is the one-point compactification.
\end{cor}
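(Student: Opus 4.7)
The plan is to read the corollary off directly from Theorem \ref{sp4-Main_theorem_Green_functions} by forming the Martin kernel and observing that everything depending on the direction of escape cancels.

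First I would fix a reference state $x_{0}=(i_{0}^{\ast},j_{0}^{\ast})$ and consider a sequence $(i_{k},j_{k})$ of states in $\mathbb{Z}_{+}^{2}$ going to infinity. By passing to a subsequence one may assume that $j_{k}/i_{k}$ converges to some $\tan(\gamma)\in[0,\infty]$ with $\gamma\in[0,\pi/2]$. Then Theorem \ref{sp4-Main_theorem_Green_functions} gives the asymptotics of $G^{i_{0},j_{0}}_{i_{k},j_{k}}$ and of $G^{x_{0}}_{i_{k},j_{k}}$ with the same direction $\gamma$, so the prefactor
\[
\frac{2}{\pi}\frac{(n-1)!}{4^{n}\sin(2\pi/n)}\cdot\frac{N_{n}(j_{k}/i_{k})}{[\cos(\pi/n)^{2}(i_{k}^{2}+2i_{k}j_{k})+j_{k}^{2}]^{n/2}}
\]
is common to numerator and denominator and cancels when one forms the Martin kernel.

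Hence for every such sequence
\[
k^{(i_{0},j_{0})}_{i_{k},j_{k}}=\frac{G^{i_{0},j_{0}}_{i_{k},j_{k}}}{G^{x_{0}}_{i_{k},j_{k}}}\;\longrightarrow\;\frac{f_{n}(i_{0},j_{0})}{f_{n}(x_{0})},
\]
and crucially this limit does not depend on $\gamma$. Consequently every sequence tending to infinity in $\mathbb{Z}_{+}^{2}$ yields the same limit for the Martin kernel, so there is a unique Martin boundary point; in other words $\widehat{E}\setminus E$ is a single point, and $\widehat{E}$ is the one-point compactification of $E$.

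There is essentially no obstacle: the only thing to verify is that the ``prefactor'' used above is strictly positive along the sequence, which is guaranteed by Remark \ref{sp4-rem_Nn} (since $N_{n}(j_{k}/i_{k})>0$ for $i_{k},j_{k}\geq 1$, the only cases $\gamma=0$ or $\gamma=\pi/2$ being handled by the explicit equivalents given there, which still let the ratio of Green functions converge). Thus the cancellation is legitimate and the corollary follows, as recorded by Dynkin \cite{Dynkin}.
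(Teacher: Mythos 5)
Your argument is correct and is exactly the intended deduction: the paper treats the corollary as an immediate consequence of Theorem \ref{sp4-Main_theorem_Green_functions} (citing \cite{Dynkin}), precisely because the asymptotics factor into a term depending only on the starting state, namely $f_{n}(i_{0},j_{0})$, times a term depending only on the target, so the Martin kernels converge to $f_{n}(i_{0},j_{0})/f_{n}(x_{0})$ along every sequence tending to infinity. Your subsequence reduction and the positivity check via Remark \ref{sp4-rem_Nn} and Proposition \ref{new_prop_properties_fn}\ref{sp4-it_positive} fill in the routine details correctly.
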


This paper therefore gives  a partial answer, for $d=2$, to the open
problem highlighted by  Ignatiouk-Robert in~\cite{III}, since
Corollary~\ref{sp4-corollary_Martin_boundary} implies that
up to the positive multiplicative constants, there is only one
positive harmonic function for $(X,Y)$.

Theorem \ref{sp4-Main_theorem_Green_functions} also has
a consequence on the absorption probabilities
     $
          \mathbb{P}_{(i_{0} , j_{0})}
          [(X,Y) $ is killed at $ (i,0)]$ and
          $\mathbb{P}_{(i_{0} , j_{0})}
          [(X,Y) $ is killed at $ (0,j)]$.
Indeed, by using the obvious equalities
     \begin{align*}
          \mathbb{P}_{(i_{0},j_{0})}[(X,Y) \text{ is killed at } \hspace{0.5mm}(i,0)]&=p_{1,-1}G_{i-1,1}^{i_{0},j_{0}},\\
          \mathbb{P}_{(i_{0},j_{0})}[(X,Y) \text{ is killed at } (0,j)]&=
          p_{-1,1}G_{1,j-1}^{i_{0},j_{0}}+p_{-1,0}G_{1,j}^{i_{0},j_{0}}
     \end{align*}
as well as Theorem \ref{sp4-Main_theorem_Green_functions} and
Remark \ref{sp4-rem_Nn}, we come to the following result.

\begin{cor}
\label{sp4-Main_theorem_absorption_probabilities}
The absorption probabilities
admit the following asymptotic as $i,j\to \infty$:
     \begin{align*}
     \label{sp4-main_theorem_absorption_probabilities}
          \mathbb{P}_{(i_{0},j_{0})}[(X,Y) \text{ is killed at }  \hspace{0.5mm}(i,0)]
          &\sim \frac{1}{2\pi} \frac{n!}
          {\left[4\cos\left(\pi/n\right)\right]^{n}}
          f_{n}\left(i_{0},j_{0}\right)
          \frac{1}{i^{n+1}},\\
          \mathbb{P}_{(i_{0},j_{0})}[(X,Y) \text{ is killed at }(0,j)]
          &\sim \frac{1}{2\pi} \frac{n!}
          {4^{n}}
          f_{n}\left(i_{0},j_{0}\right)
          \frac{1}{j^{n+1}}.
     \end{align*}
\end{cor}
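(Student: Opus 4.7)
The plan is to treat Corollary~\ref{sp4-Main_theorem_absorption_probabilities} as a direct consequence of Theorem~\ref{sp4-Main_theorem_Green_functions} and Remark~\ref{sp4-rem_Nn}, via the two identities displayed immediately before the corollary. So the proof will just substitute the Green function asymptotic into those identities and simplify the constants, paying attention to the fact that the relevant Green functions live along the axes, i.e., correspond to the degenerate angles $\gamma=0$ and $\gamma=\pi/2$ in Theorem~\ref{sp4-Main_theorem_Green_functions}, where one must use the refined expansions of $N_n$ given in Remark~\ref{sp4-rem_Nn}.

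For the first asymptotic, I would apply Theorem~\ref{sp4-Main_theorem_Green_functions} to $G_{i-1,1}^{i_0,j_0}$ with $i\to\infty$ and $j=1$, so that $j/i\to 0$. The denominator then behaves like $\cos(\pi/n)^{n}\, i^{n}$, while Remark~\ref{sp4-rem_Nn} gives $N_n(1/(i-1))\sim n\tan(\pi/n)/i$. Multiplying by $p_{1,-1}=\cos(\pi/n)^{2}/2$ and collapsing the factor $\sin(2\pi/n)=2\sin(\pi/n)\cos(\pi/n)$ together with $\tan(\pi/n)/\cos(\pi/n)^{n}$ should reduce the overall constant to $n!/\bigl([4\cos(\pi/n)]^{n}\cdot 2\pi\bigr)$, as wanted.

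For the second asymptotic, I would apply Theorem~\ref{sp4-Main_theorem_Green_functions} to both $G_{1,j-1}^{i_0,j_0}$ and $G_{1,j}^{i_0,j_0}$ with $i=1$, $j\to\infty$, so that $j/i\to\infty$, i.e.\ $\gamma=\pi/2$. The denominator behaves like $j^{n}$ and Remark~\ref{sp4-rem_Nn} yields $N_n(j)\sim (n\sin(2\pi/n)/2)/j$. The factor $\sin(2\pi/n)$ in the numerator cancels the one in the denominator of Theorem~\ref{sp4-Main_theorem_Green_functions}, giving $G_{1,j-1}^{i_0,j_0}\sim G_{1,j}^{i_0,j_0}\sim \frac{1}{\pi}\frac{n!}{4^{n}}f_{n}(i_{0},j_{0})\,j^{-(n+1)}$ to leading order. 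Then, since $p_{-1,1}+p_{-1,0}=\cos(\pi/n)^{2}/2+\sin(\pi/n)^{2}/2=1/2$, their sum produces exactly the announced constant $n!/(4^{n}\cdot 2\pi)$.

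There is no real obstacle here: everything reduces to trigonometric bookkeeping. The only points requiring care are (a) using the \emph{sharpened} forms of $N_n$ from Remark~\ref{sp4-rem_Nn} rather than just the fact that $N_n\to 0$, since the non-trivial $1/i$ and $1/j$ prefactors are precisely what promote the $i^{-n}$, $j^{-n}$ decay of the denominator to the claimed $i^{-(n+1)}$, $j^{-(n+1)}$, and (b) noting that the shifts $i\leadsto i-1$ and $j\leadsto j-1$ are harmless at leading order.
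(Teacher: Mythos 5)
Your proposal is correct and follows exactly the paper's route: the paper derives the corollary from the two identities expressing the absorption probabilities as $p_{1,-1}G_{i-1,1}^{i_0,j_0}$ and $p_{-1,1}G_{1,j-1}^{i_0,j_0}+p_{-1,0}G_{1,j}^{i_0,j_0}$, combined with Theorem~\ref{sp4-Main_theorem_Green_functions} and the refined expansions of $N_n$ in Remark~\ref{sp4-rem_Nn} at $\gamma=0$ and $\gamma=\pi/2$. Your trigonometric simplifications of the constants (using $\tan(\pi/n)/\sin(2\pi/n)=1/(2\cos(\pi/n)^2)$ and $p_{-1,1}+p_{-1,0}=1/2$) are exactly the bookkeeping the paper leaves implicit, and they check out.
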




\subsection{Harmonic functions and link with Brownian motion}
\label{sub_harmo_bm}

Let us now have a closer look at the harmonic function
$f_{n}$ that governs the asymptotic (\ref{sp4-main_theorem_Green_functions})
of the Green functions (\ref{sp4-Main_theorem_Green_functions}).
All the results of Subsection \ref{sub_harmo_bm} are proven in
Section \ref{sp4-Harmonic_functions}.

\begin{prop}
\label{new_prop_properties_fn}
$\phantom{cc}$
     \begin{enumerate}[label=\textnormal{(\roman{*})},ref=\textnormal{(\roman{*})}]
          \item \label{sp4-it_polynomial}
          $f_{n}$ is a real polynomial in the variables $i_{0},j_{0}$ of degree
          exactly $n$;
          \item \label{sp4-it_harmo}
          $f_{n}$ is a harmonic function for the process $(X,Y)$;
          \item \label{sp4-it_zero_boundary}
          $f_{n}(i_{0},0)=f_{n}(0,j_{0})=0$ for all integers $i_{0}$ and $j_{0}$;
          \item \label{sp4-it_positive}
          If $i_{0},j_{0}>0$ then $f_{n}(i_{0},j_{0})>0$.
     \end{enumerate}\end{prop}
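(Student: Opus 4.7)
The plan is to read the explicit expression of $f_n$ from its definition (\ref{sp4-def_f_n}) in Section~\ref{sp4-Harmonic_functions} and then verify (i)--(iv) one by one. The shape of $f_n$ is dictated by the structure revealed by the asymptotic~(\ref{sp4-main_theorem_Green_functions}): a real polynomial of degree~$n$ vanishing on both axes whose continuous analogue is $r^n\sin(n\theta)$ in a wedge of opening~$\pi/n$. I expect $f_n$ to arise, in the style of \cite{FIM,sl3,mythesis}, as a suitable combination over the orbit of the group $W$ of order $2n$, leading to a compact closed expression of the form $\mathrm{Im}[P(i_0,j_0)^n]$ (plus lower-order corrections if necessary) for some well-chosen complex linear form $P$ built from the eigenvectors of the generators $\widehat{\xi}$, $\widehat{\eta}$ acting on the kernel curve $Q(x,y)=0$.

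With such an expression at hand, items (i), (iii) and (iv) are essentially inspection. For (i), the polynomial nature and the total degree exactly~$n$ follow from the binomial expansion of the closed form, once one checks that the top-degree monomials do not all cancel; this is controlled by an explicit trigonometric factor in $\pi/n$ that is nonzero for every $n\geq 3$. For (iii), setting $j_0=0$ or $i_0=0$ corresponds to evaluating the imaginary part of a complex number which is either real or a real multiple of $e^{\pm i\pi}$, and both are real. For (iv), when $i_0,j_0>0$ the argument of the underlying complex number lies strictly in $(0,\pi/n)$, so the sine of $n$ times this argument is strictly positive; this is consistent with the formulation of the asymptotic~(\ref{sp4-main_theorem_Green_functions}) and the analysis of $N_n(j/i)$ in Remark~\ref{sp4-rem_Nn}.

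The main work is item (ii). I would verify directly the interior identity
\begin{equation*}
p_{1,0}f_n(i_0+1,j_0)+p_{-1,0}f_n(i_0-1,j_0)+p_{1,-1}f_n(i_0+1,j_0-1)+p_{-1,1}f_n(i_0-1,j_0+1)=f_n(i_0,j_0)
\end{equation*}
for every $(i_0,j_0)$ with $i_0,j_0\geq 1$; property~(iii) then ensures harmonicity extends correctly across the absorbing boundary, since the walk is killed as soon as it hits an axis. The computational core is to reduce this identity to a single trigonometric relation. The idea is that each orbital term in (\ref{sp4-def_f_n}) is built from evaluations of $x^{i_0}y^{j_0}$ at a point $(x,y)$ on $Q(x,y)=0$, so shifting the indices by a jump $(k,\ell)$ multiplies that term by $x^k y^\ell$, and the Markov equation at $(i_0,j_0)$ becomes the relation $Q(x,y)/(xy)=0$; the non-trivial content is that the boundary-vanishing combination over the whole orbit is compatible with this cancellation.

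The main obstacle is to organize the orbit-sum so that these cancellations become transparent, which is exactly why having the closed form (\ref{sp4-def_f_n}) is essential and why hypothesis (H3) is invoked: the trigonometric values of $p_{1,0}$ and $p_{1,-1}$ prescribed by (H3) are precisely what is needed to make the orbit under $W$ close up into a polynomial of degree $n$, in keeping with the general criterion for the finiteness of $W$ stated in Remark~\ref{sp4-assumption_parameters_necessary}. Once the closed form is in place the verification reduces to the identity $\sin^2(\pi/n)+\cos^2(\pi/n)=1$ together with standard angle-addition formulas.
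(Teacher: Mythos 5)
There is a genuine gap: your argument is built on a conjectured closed form for $f_n$, namely $\mathrm{Im}\bigl[P(i_0,j_0)^n\bigr]$ for a complex linear form $P$, ``plus lower-order corrections if necessary'', and then treats (i), (iii), (iv) as inspection of that form. But the paper defines $f_n$ in~(\ref{sp4-def_f_n}) as (a multiple of) the imaginary part of the $n$-th Taylor coefficient $\kappa_n(i_0,j_0)$ of the lifted monomial $x^{i_0}y^{j_0}(z)$ at $z=0$, and Proposition~\ref{prop_non_homo} shows that $f_n$ is \emph{not} homogeneous for $n\geq 5$: the lower-order corrections are genuinely present, and they are exactly what your inspection arguments cannot absorb. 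Your proof of (iv) is the clearest casualty: the claim that ``the argument of the underlying complex number lies strictly in $(0,\pi/n)$, so the sine of $n$ times this argument is strictly positive'' is valid only for the pure power $\mathrm{Im}[P^n]$, i.e.\ for the dominant homogeneous part (this is the content of Proposition~\ref{prop_link}); it says nothing about the sign of the full polynomial. The paper's positivity proof is instead a real argument about the coefficients: it shows the sequence $\kappa_0(i,0),\dots,\kappa_{n-1}(i,0)$ is alternating in sign, writes $\kappa_n(i,j)$ by the Cauchy product of $x(z)^i$ and $y(z)^j$ with $\kappa_p(0,j)=\varphi_p(j)(-1)^p e^{\imath p\pi/n}$, $\varphi_p(j)>0$, and exhibits $f_n(i,j)$ as a sum of $n-1$ manifestly positive terms. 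Similarly, the degree statement (i) and the boundary vanishing (iii) are proved in the paper from the explicit expansions~(\ref{sp4-explicit_expansions_x_y}) (reality of the coefficients of $x(z)$; the phase $e^{\imath n\pi/n}=-1$ at order $n$ for $y(z)^j$; nonvanishing of $\kappa_1(1,0)$ and $\kappa_1(0,1)$ giving the dominant term~(\ref{sp4_future_dominant})), none of which you can reach without first pinning down what $f_n$ actually is.

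The one part that is essentially aligned with the paper is (ii): harmonicity does follow, as you suggest, from the kernel relation — the paper multiplies $Q(x(z),y(z))=0$ by $x^{i-1}y^{j-1}(z)$ and extracts Taylor coefficients, so that \emph{every} $\kappa_p$ is harmonic, in particular $\kappa_n$. But note that this makes (ii) the easy item, and it does not need the orbit structure, the closing-up of the orbit, or (H3) at all (those enter for the meromorphic continuation and the asymptotics, not for harmonicity); your framing of (ii) as ``the main work'' reducible to $\sin^2(\pi/n)+\cos^2(\pi/n)=1$ and of (i), (iii), (iv) as inspection inverts where the actual difficulty lies.
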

The explicit formulation of $f_{n}$ for general values of $n$---that we shall obtain in Section~\ref{sp4-Harmonic_functions}--- being quite complex,
here we just give the following three examples:
     \begin{eqnarray*}
          f_{3}(i_{0},j_{0})&=&\hspace{6mm}24\cdot 3^{1/2}\cdot i_{0} j_{0} (i_{0}+2j_{0}),\\
          f_{4}(i_{0},j_{0})&=&\hspace{6.5mm}(256/3)\cdot i_{0} j_{0} (i_{0}+2j_{0}) (i_{0}+j_{0}),\\
          f_{6}(i_{0},j_{0})&=&(288/5) 3^{1/2}\cdot  i_{0} j_{0}(i_{0}+2j_{0})(i_{0}+j_{0})
          \big((i_{0}+2j_{0}/3)(i_{0}+4j_{0}/3)+10/9\big).
     \end{eqnarray*}
Note that for $n=3$ and $n=4$, $f_n$ is a homogeneous function of degree $n$
(i.e.~$f_{n}(\lambda x,\lambda y)=\lambda^{n} f_{n}(x,y)$), while
$f_6$ is not. In fact the next result holds.
\begin{prop}
\label{prop_non_homo} For any $n\geq 5$, $f_n$ is not homogeneous.
\end{prop}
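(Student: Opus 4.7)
The plan is to contradict the discrete harmonicity of $f_n$ by computing the obstruction coming from the fourth-order part of the discrete Laplacian.

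First, I would expand the discrete Laplacian on polynomials. Since the step distribution is symmetric under $(i,j)\mapsto(-i,-j)$, only even orders survive, and for every polynomial $f$
\[
Lf \;=\; \sum_{k\geq 1} \frac{2}{(2k)!}\,L_{2k}f,\qquad L_{2k} := p\,\partial_1^{2k}+q\,(\partial_1-\partial_2)^{2k},
\]
with $p=\sin^{2}(\pi/n)/2$, $q=\cos^{2}(\pi/n)/2$, and $L_{2k}$ reducing the homogeneous degree by exactly $2k$. Decomposing $f_n=\sum_m g_m$ into homogeneous components and sorting $Lf_n=0$ by total degree, the leading stratum yields $L_2 g_n=0$; by applying Proposition~\ref{new_prop_properties_fn}\ref{sp4-it_zero_boundary} to each stratum separately, $g_n$ also vanishes on both axes.

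Second, I would identify $g_n$ explicitly. Over $\mathbb{C}$, the operator $L_2$ factors as $(\lambda\partial_1+\mu\partial_2)(\bar\lambda\partial_1+\bar\mu\partial_2)$ with $\lambda=\sqrt{p}+i\sqrt{q}$ and $\mu=-i\sqrt{q}$, so the space of homogeneous $L_2$-harmonic polynomials of degree $n$ is two-dimensional, spanned by $\operatorname{Re}(\zeta^n)$ and $\operatorname{Im}(\zeta^n)$, where $\zeta(x,y):=\mu x-\lambda y$. Evaluating a real combination $a\operatorname{Re}(\zeta^n)+b\operatorname{Im}(\zeta^n)$ on $\{y=0\}$ and $\{x=0\}$ gives the two conditions $a\cos(n\pi/2)\pm b\sin(n\pi/2)=0$, which are linearly dependent precisely because $n$ is an integer (this is where (H3) enters, through $\arg\lambda=\pi/2-\pi/n$). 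The common solution is the one-dimensional family
\[
\widetilde{f}_n \;:=\; \begin{cases}\operatorname{Im}(\zeta^n), & n\text{ even},\\ \operatorname{Re}(\zeta^n), & n\text{ odd},\end{cases}
\]
and Proposition~\ref{new_prop_properties_fn}\ref{sp4-it_positive} then forces $g_n=c\,\widetilde{f}_n$ for some $c\neq 0$.

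Third, I would derive the contradiction. If $f_n$ were homogeneous of degree $n$, then $f_n=c\,\widetilde{f}_n$, and the lower homogeneous strata of $Lf_n=0$ would force $L_{2k}\widetilde{f}_n=0$ for every $k\geq 2$ with $2k\leq n$. However, since $\partial_1\zeta=\mu$ and $(\partial_1-\partial_2)\zeta=\mu+\lambda=\sqrt{p}$ are constants, an iterated chain-rule computation gives
\[
L_4\zeta^n \;=\; \bigl[p\,\mu^{4}+q\,(\sqrt{p})^{4}\bigr]\frac{n!}{(n-4)!}\,\zeta^{n-4} \;=\; \frac{pq}{2}\,\frac{n!}{(n-4)!}\,\zeta^{n-4}.
\]
For $n\geq 5$, $\zeta^{n-4}$ is a nontrivial complex polynomial of degree $\geq 1$, so $\operatorname{Re}(\zeta^{n-4})$ and $\operatorname{Im}(\zeta^{n-4})$ are nonzero real polynomials; consequently $L_4\widetilde{f}_n\neq 0$, which contradicts the requirement from the previous sentence. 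The main obstacle is really the second step: one must exploit (H3) and the integrality of $n$ to get the degeneracy of the boundary conditions, after which the $L_4$-mismatch is transparent because the coefficient $pq(p+q)=pq/2$ is strictly positive.
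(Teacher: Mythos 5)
Your argument is correct, but it is a genuinely different route from the paper's. The paper proves Proposition~\ref{prop_non_homo} by brute force: using the explicit expansions~(\ref{sp4-explicit_expansions_x_y}) and Cauchy products it computes $f_n(1,1)=8n^3/\tan(\pi/n)$ and $f_n(2,2)=(16/3)n^3[n^2+2-6/\tan(\pi/n)^2]/[\sin(\pi/n)^2\tan(\pi/n)]$ and checks that $f_n(2,2)/f_n(1,1)\neq 2^n$ for $n\geq 5$. You instead argue structurally: writing the discrete generator on polynomials as $\sum_{k\geq1}\frac{2}{(2k)!}\bigl[p\,\partial_1^{2k}+q\,(\partial_1-\partial_2)^{2k}\bigr]$ and sorting by homogeneous degree, a homogeneous harmonic polynomial of degree $n$ vanishing on the axes must be (up to a constant) $\operatorname{Im}(\zeta^n)$ or $\operatorname{Re}(\zeta^n)$ with $\zeta=\mu x-\lambda y$, and the fourth-order stratum $L_4$ then gives a nonzero obstruction proportional to $pq(p+q)\,\zeta^{n-4}$ once $n\geq5$; your checks of the factorization of $L_2$, of the boundary conditions $a\cos(n\pi/2)\pm b\sin(n\pi/2)=0$ (whose degeneracy indeed uses (H3) and the integrality of $n$), and of $\partial_1\zeta=\mu$, $(\partial_1-\partial_2)\zeta=\sqrt{p}$ are all correct. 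What your approach buys is conceptual insight: it avoids any explicit evaluation of $f_n$, and it explains why $n=3,4$ are exceptional (for $n=4$ the obstruction is the imaginary part of a real constant, hence zero), whereas the paper's computation buys explicit values of $f_n(1,1)$, $f_n(2,2)$. Two small points to tighten: the nonvanishing of the obstruction needs one more line than ``$\zeta^{n-4}$ is a nontrivial complex polynomial'' — a nonzero complex polynomial can have identically zero real (or imaginary) part, so you should note that $\zeta$ and $\bar\zeta$ are non-proportional linear forms (since $\operatorname{Im}(\bar\lambda\mu)=-\sqrt{pq}\neq0$), whence $\operatorname{Re}(\zeta^{m})$ and $\operatorname{Im}(\zeta^{m})$ are both nonzero for $m\geq1$; and the constant $c\neq0$ in $g_n=c\,\widetilde{f}_n$ follows from item \ref{sp4-it_polynomial} of Proposition~\ref{new_prop_properties_fn} (degree exactly $n$) rather than from positivity \ref{sp4-it_positive}.
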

Let us conclude the introduction by outlining the link of $f_n$
with the harmonic functions of Brownian motion. Let
     \begin{equation}
     \label{def_trans}
          \phi(x,y)=\big((x+y)/\sin(\pi/n),
          y/\cos(\pi/n)\big).
     \end{equation}
Then the random walk $\phi(X,Y)$ has an identity
covariance and takes its values in the cone $\Lambda(0,\pi/n)=\{t
\exp(\imath \theta): 0\leq t\leq
          \infty, 0\leq \theta\leq \pi/n\}$.

 \begin{figure}[!ht]
 \begin{picture}(10.00,70.00)
 \hspace{20mm}
 \includegraphics{aaaXL2.eps}
 \hspace{70mm}
 \includegraphics{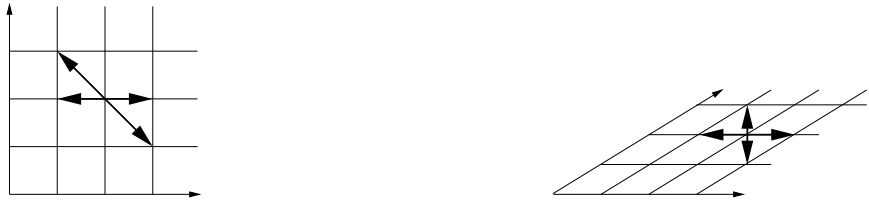}
 \end{picture}
 \caption{On the left, the walk $(X,Y)$; on the right, the walk
 $\phi(X,Y)$, with $\phi$ defined in (\ref{def_trans})}
 \label{tra}
 \end{figure}

           $\phi(X,Y)$ therefore lies in the domain of attraction of the standard Brownian motion
          killed at the boundary of $\Lambda(0,\pi/n)$.

For the Brownian motion, it is well-known that there is only one
harmonic function $h$ positive inside of a given cone and vanishing
on the boundary: it is called the r\'eduite \cite{MR513885} of the cone.
It happens to be homogeneous \cite{MR513885}. When the cone is $\Lambda(0,\pi/n)$,
the r\'eduite is equal to $h(\rho\exp(\imath \theta))
=\rho^{n}\sin(n\theta)$.

Moreover, the asymptotic of the Green functions
of the Brownian
motion killed at the boundary of $\Lambda(0,\pi/n)$ can be obtained
from \cite{Smits} and is equal to:
     \begin{equation*}
     \label{asymp_GF_MB}
          G^{\rho\exp(\imath \theta)}_{r \exp(\imath \eta)} \sim
          \frac{2}{\pi^{1/2}}h(\rho\exp(\imath \theta))\frac{\sin(n \eta)}{r^n},
          \ \ \ \ \ r\to \infty.
     \end{equation*}

\begin{prop}
\label{prop_link} Let $\phi$ be defined in~(\ref{def_trans}).
 Up to a multiplicative constant, the homogeneous function
$h(\phi(i_0,j_0))$ equals the dominant term of the non-homogeneous
harmonic function of the random walk, i.e.
\begin{equation*}
\label{fff}
 f_{n}(i_0,j_0)=h(\phi(i_0,j_0))[1+o(1)],
 \ \ \ \ \ i_0,j_0 \to\infty.
\end{equation*}
\end{prop}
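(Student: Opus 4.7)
The plan is to decompose $f_n$ into its top homogeneous component plus a lower-order remainder, identify the top part with a positive constant multiple of $h\circ\phi$ using the uniqueness of the réduite of $\Lambda(0,\pi/n)$, and then show the remainder is asymptotically negligible.

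By Proposition \ref{new_prop_properties_fn}~\ref{sp4-it_polynomial} write $f_n = f_n^{(n)} + r$ with $f_n^{(n)}\not\equiv 0$ homogeneous of degree $n$ and $\deg r \leq n-1$. The walk $(X,Y)$ has zero drift by (H1), so the Taylor expansion of the discrete generator $L$ applied to any polynomial $g$ of degree $n$ reads
\begin{equation*}
 Lg = \tfrac12 \operatorname{tr}\bigl(\Sigma\, H(g)\bigr) + R_g,
 \qquad \deg R_g \leq n-3,
\end{equation*}
where $\Sigma$ is the one-step covariance, computable from (H1) and (H3). Applying this to $f_n$, using $Lf_n = 0$, and isolating the component of maximal degree $n-2$ yields the polynomial identity $\operatorname{tr}\bigl(\Sigma\, H(f_n^{(n)})\bigr) = 0$. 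A direct check gives $M\Sigma M^{\top}=I$ for the matrix $M$ of $\phi$ (as already noted in the text), hence $u:=f_n^{(n)}\circ\phi^{-1}$ is a \emph{genuine} harmonic function on $\Lambda(0,\pi/n)$. Taking top homogeneous parts in Proposition \ref{new_prop_properties_fn}~\ref{sp4-it_zero_boundary} and \ref{sp4-it_positive} shows that $u$ vanishes on $\partial \Lambda(0,\pi/n)$ and is non-negative and non-zero inside; by the uniqueness of the réduite of the wedge recalled just above, $u = c\, h$ for some $c>0$, i.e.\ $f_n^{(n)} = c\, h\circ\phi$.

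It remains to estimate the remainder $r$. Since $f_n$ and $f_n^{(n)}$ both vanish on the axes, so does $r$; by unique factorisation in $\mathbb{R}[i_0,j_0]$ this forces $r = i_0 j_0\, \tilde r$ with $\deg \tilde r \leq n-3$, whence $|r(i_0,j_0)| \leq C\, i_0 j_0 (i_0+j_0)^{n-3}$ for $i_0,j_0\geq 1$. On the other hand, $h\circ\phi = \operatorname{Im}\bigl[\bigl(i_0/\sin(\pi/n) + \bigl(1/\sin(\pi/n)+\imath/\cos(\pi/n)\bigr) j_0\bigr)^n\bigr]$, and expanding shows the coefficients of $i_0^{n-1}j_0$ and $i_0 j_0^{n-1}$ are nonzero. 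Therefore $P:=h\circ\phi/(i_0 j_0)$ is a homogeneous polynomial of degree $n-2$ strictly positive on the compact set $\{i_0,j_0\geq 0,\,i_0+j_0=1\}$ (interior from positivity of $h$ inside the cone; endpoints from the just-computed coefficients). Compactness gives $P\geq m>0$ on that set, and by homogeneity $f_n^{(n)}(i_0,j_0)\geq c m\, i_0 j_0 (i_0+j_0)^{n-2}$. Combining the two bounds, $|r|/f_n^{(n)} = O\bigl(1/(i_0+j_0)\bigr) \to 0$ as $i_0,j_0\to\infty$, so $f_n = f_n^{(n)}\,(1+o(1)) = c\,h\circ\phi\,(1+o(1))$, which is the claim up to the multiplicative constant.

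The main obstacle is the identification step: one must cleanly justify that discrete harmonicity with zero drift implies the continuous PDE $\operatorname{tr}(\Sigma H(f_n^{(n)}))=0$ at the leading degree, and then invoke uniqueness of the réduite with the correct boundary behaviour. Once $f_n^{(n)} = c\, h\circ\phi$ is in hand, the remainder estimate is essentially a bookkeeping of degrees modulo the nonvanishing of the two critical coefficients of $h\circ\phi$.
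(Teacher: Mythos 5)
Your proof is correct, but it takes a genuinely different route from the paper's. The paper argues by direct computation: it writes $h(u,v)=\sum_{p}\binom{n}{2p+1}(-1)^{p}u^{n-(2p+1)}v^{2p+1}$, substitutes $\phi(i_0,j_0)$, and checks that the result is proportional to the explicit dominant term (\ref{sp4_future_dominant}) of $f_n$ already obtained in the proof of Proposition~\ref{new_prop_properties_fn}; the $o(1)$ is then immediate because all coefficients in (\ref{sp4_future_dominant}) are positive. You never touch the explicit coefficients: you show that the top homogeneous part $f_n^{(n)}$ of the discretely harmonic polynomial $f_n$ satisfies $\operatorname{tr}\big(\Sigma\, H(f_n^{(n)})\big)=0$ (zero drift kills the gradient term, higher Taylor terms drop the degree by at least three), transport this through $\phi$ via $M\Sigma M^{\top}=I$ to genuine harmonicity on the wedge, identify $f_n^{(n)}\circ\phi^{-1}$ with the r\'eduite by the uniqueness recalled from \cite{MR513885}, and finish with a degree count on the remainder. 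This is more conceptual, independent of the closed form (\ref{sp4_future_dominant}) (in effect it re-derives its structure), and it makes the link with the Brownian r\'eduite transparent; the paper's route is shorter only because (\ref{sp4_future_dominant}) is already available. Two points you should make explicit to be airtight: first, $Lf_n=0$ holds as a polynomial identity because harmonicity gives it at every point of $\mathbb{Z}_{\geq 1}^{2}$, a set on which a polynomial cannot vanish without being zero; second, the uniqueness statement requires strict positivity inside the cone, so you must upgrade ``non-negative and not identically zero'' (obtained as limits of $t^{-n}f_n(ta,tb)$ along rational directions) to positivity via the strong minimum principle before invoking it. Also note your divisibility step $r=i_0j_0\tilde r$ is not needed: $\deg r\leq n-1$ together with $f_n^{(n)}\geq c\,i_0j_0(i_0+j_0)^{n-2}$ already yields $|r|/f_n^{(n)}=O\big(1/i_0+1/j_0\big)\to 0$.
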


Proposition \ref{prop_link} will follow from our results by a
direct computation, see Section~\ref{sp4-Harmonic_functions}.
Let us also note, as in \cite{EJP2010-37}, that this 
proposition
is in accordance with Donsker's invariant~principle.

\begin{rem}
\label{rwMB}
   Proposition \ref{prop_link} also entails
   that the comparison approach of random walks by Brownian
   motion sketched in the first part of the introduction can
   neither give the approxi- mation of the Green functions with the precision
    of Theorem \ref{sp4-Main_theorem_Green_functions} nor even
  to specify the unique harmonic function for our class of random walks  (H1), (H2) and (H3).

     Furthermore, from the asymptotic results (\ref{sp4-main_theorem_Green_functions})
     we notice that
      the Green functions can tend to zero arbitrarily
     fast whenever the order of the group is taken high enough, while
     the Beurling estimates \cite{Kestenn,LL} relating the random walk
    to the Brownian motion typically have a polynomial precision---which in addition depends only on the dimension.
\end{rem}

The rest of the paper is organized as follows. In Section \ref{sp4-h_x_z}
we find explicitly the~absorption probabilities and the Green functions
(\ref{sp4-definition_Green_functions}). In Section \ref{sp4-Harmonic_functions},
we then study precisely the harmonic function $f_n$. Finally in Section
\ref{sp4-Martin_boundary} we prove Theorem \ref{sp4-Main_theorem_Green_functions}.

\section{Expression of the absorption probabilities and of the Green functions}
\label{sp4-h_x_z}

\subsection{A functional equation between the generating functions}
\label{sp4-Introduction}
Subsection~\ref{sp4-Introduction}
consists in preparatory results and is inspired by the book~\cite{FIM}.
Define
     \begin{equation}
     \label{sp4-def_generating_functions}
              \begin{array}{rcl}
              \displaystyle G^{i_{0},j_{0}}(x,y) &=& \displaystyle \sum_{i,j\geq 1} G_{i,j}^{i_{0},j_{0}}x^{i-1} y^{j-1},\\
              \displaystyle h^{i_{0},j_{0}}(x) &=&\displaystyle
               \sum_{i\geq 1}^{ }  \mathbb{P}_{(i_{0},j_{0})}[(X,Y) \text{ is killed at } (i,0)] x^{i},\\
              \displaystyle \widetilde{h}^{i_{0},j_{0}}\left(y\right) &=& \displaystyle
               \sum_{j\geq 1}^{ } \mathbb{P}_{(i_{0},j_{0})}[(X,Y) \text{ is killed at } (0,j)] y^{j}
               \end{array}
     \end{equation}
     the generating functions of the Green functions (\ref{sp4-definition_Green_functions})
     and of the absorption probabilities.
With these notations, we can state the following functional equation:
     \begin{equation}
     \label{sp4-functional_equation}
          Q\left(x,y\right)G^{i_{0},j_{0}}\left(x,y\right) =
          h^{i_{0},j_{0}}\left(x\right)+
          \widetilde{h}^{i_{0},j_{0}}\left( y \right)
          -x^{i_{0}} y^{j_{0}},
     \end{equation}
$Q$ being defined in~(\ref{sp4-def_Q}). {A priori}, Equation
(\ref{sp4-functional_equation}) has a meaning in $\{(x,y)\in
\mathbb{C}^{2}: |x|<1, |y|<1 \}$. The proof of
(\ref{sp4-functional_equation}) is obtained exactly as in
Subsection~2.1 of \cite{KR}.

When no ambiguity on the initial state can arise, we will drop
the index $i_{0},j_{0}$ and we will write $G_{i,j},G(x,y),h(x),
\widetilde{h}(y)$ for $G^{i_{0},j_{0}}_{i,j},G^{i_{0},j_{0}}(x,y),
h^{i_{0},j_{0}}(x),\widetilde{h}^{i_{0},j_{0}}(y)$.

\medskip

Let us now have a look to the algebraic curve
$\{(x,y)\in (\mathbb{C}\cup \{\infty\})^{2}: Q(x,y)=0\}$, that we note
$\mathscr{Q}$ for the sake of briefness.
Start by writing the polynomial~(\ref{sp4-def_Q}) alternatively
     \begin{equation}
     \label{sp4-def_H_alternative}
          Q\left(x,y\right) = a\left(x\right) y^{2}+ b\left(x\right) y
          + c\left(x\right) = \widetilde{a}\left(y\right) x^{2}+
          \widetilde{b}\left(y\right) x + \widetilde{c}\left(y\right),
     \end{equation}
where
$a(x) = p_{1,-1}$, $b(x) = p_{1,0}x^{2}-x+p_{1,0}$,
$c(x)= p_{1,-1}x^{2}$ and $\widetilde{a}(y) = p_{1,0}y +p_{1,-1}$,
$\widetilde{b}(y) =-y$, $\widetilde{c}(y) = p_{1,-1} y^{2}+p_{1,0}y$.
Set also $d(x)=b(x)^{2}-4a(x)c(x)$ and
$\widetilde{d}(y)=\widetilde{b}(y)^{2}-4\widetilde{a}(y)\widetilde{c}(y)$. We have
     \begin{equation}
     \label{sp4-def_d_d_tilde}
          d\left(x\right) = p_{1,0}^{2}\big(x-1\big)^{2}
          \big(x^{2}+2x(1-1/p_{1,0})+1\big),\ \ \ \ \
          \widetilde{d}(y)=-4p_{1,0}p_{1,-1}y\big(y-1\big)^{2}.
     \end{equation}

The polynomial $d$ has manifestly a double root at $1$ and two simple roots at
positive points, that we denote by $x_{1}<1<x_{4}$.
As for $\widetilde{d}$, it has a double root at $1$ and a simple
root at $0$. We also note $y_{1}=0$ and $y_{4}=\infty$.

Then with~(\ref{sp4-def_H_alternative}) we notice that
$Q(x,y)=0$ is equivalent to $[b(x)+2a(x)y]^{2}=d(x)$
or to $[\widetilde{b}(y)+2\widetilde{a}(y)x]^{2}=\widetilde{d}(y)$.
It follows from the particular form of
$d$ or $\widetilde{d}$, see (\ref{sp4-def_d_d_tilde}), that
the surface $\mathscr{Q}$ has genus zero and is thus
homeomorphic to a sphere \cite{JS}. As a consequence
this Riemann surface can be rationally uniformized, in
the sense that
it is possible to find two rational functions,  say
$\pi$ and $\widetilde{\pi}$, such that~$\mathscr{Q}
=\{(\pi(s),\widetilde{\pi}(s)): $ $s\in \mathbb{C}\cup \{\infty\} \}$.
Furthermore, as shown in Chapter~6 of~\cite{FIM}, we can
take~$\pi(s)=[x_{4}+x_{1}]/2+([x_{4}-x_{1}]/4) (s+1/s)$, $x_{1}$ and
$x_{4}$ being defined below~(\ref{sp4-def_d_d_tilde}); it is
then possible to deduce a correct expression for $\widetilde{\pi}$,
since by construction the equality $Q(\pi,\widetilde{\pi})=0$ has to hold.
For more details about the
construction of Riemann surfaces, see for instance~\cite{JS}.

\subsection{Uniformization and meromorphic continuation}
\label{sp4-Galois_automorphisms}


But rather than the uniformization $(\pi,\widetilde{\pi})$ proposed in~\cite{FIM}
and recalled at the end of the previous subsection, we prefer using another,
that will turn out to be quite more convenient. This new uniformization, that
we call $(x,y)$, is just equal to $(\pi \circ L,\widetilde{\pi} \circ L)$,~where
     \begin{equation*}
          L\left(z\right) = \frac{z_{0}z - 1}{z-z_{0}},
          \ \ \ \ \ z_{0}=-\exp\left(-\imath \pi/n\right).
     \end{equation*}

We notice that $z_{0}$ is such that $\pi(z_{0})=\pi(\overline{z_{0}})=
\widetilde{\pi}(z_{0})=\widetilde{\pi}(\overline{z_{0}})=1$
and that its explicit expression above is due to~(H3),
for more details see Remark~\ref{sp4-assumption_parameters_necessary}.
Then, starting from the formulations of $(\pi,\widetilde{\pi})$
and of $L$, we easily show that the expression of the new uniformization can be
     \begin{equation}
     \label{sp4-uniformization}
          x\left(z\right) = \frac{\left(z+z_{0}\right)
          \left(z+\overline{z_{0}}\right)}
          {\left(z-z_{0}\right)\left(z-\overline{z_{0}}\right)},\ \ \ \ \
          y\left(z\right) = \frac{(z+z_{0})^{2}}
          {(z-z_{0})^{2}}.
     \end{equation}

Compared to $(\pi,\widetilde{\pi})$, this uniformization $(x,y)$ has the significant
advantage of transfor-ming the important cycles (i.e.\ the branch cuts
$[x_{1},x_{4}]$ and $[y_{1},y_{4}]$, the unit circles~$\{|x|=1\}$ and $\{|y|=1\}$)
into very simple cycles,
since the following equalities hold, see also Figure \ref{sp4-transformation_cycles}:
     \begin{equation}
     \label{sp4-eH_cycles}
     \begin{array}{cccccc}
          x^{-1}([x_{1},x_{4}])&=&\phantom{z_{0}}\mathbb{R}\cup\{\infty\},\ \ &
          x^{-1}(\{|x|=1\})&=&\phantom{z_{0}}\imath \mathbb{R}\cup\{\infty\},\\
          y^{-1}([y_{1},y_{4}])&=&z_{0} \mathbb{R}\cup\{\infty\},\ \ &
          y^{-1}(\{|y|=1\})&=&z_{0} \imath \mathbb{R}\cup\{\infty\}.
     \end{array}
     \end{equation}

 \begin{figure}[!ht]
 \begin{center}
 \begin{picture}(420.00,150.00)
 \includegraphics{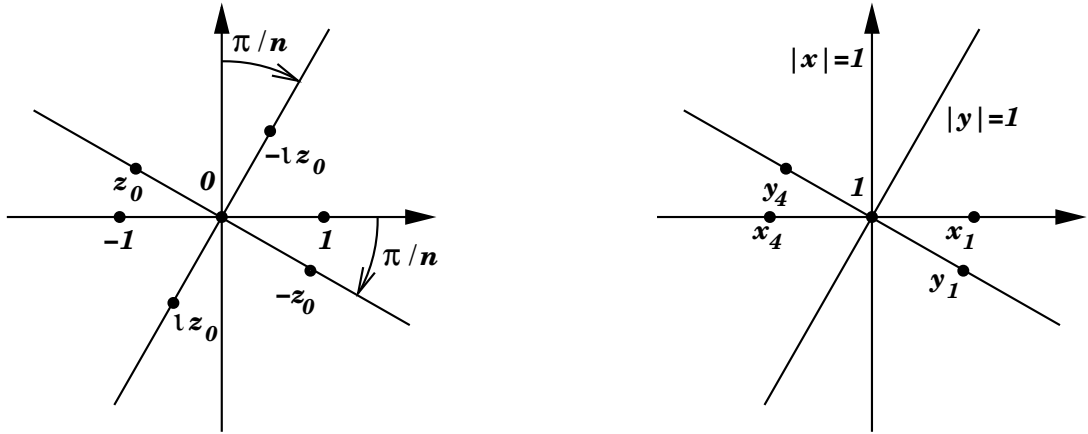}
 \end{picture}
 \end{center}
 \vspace{-3mm}
 \caption{The uniformization space $\mathbb{C}\cup\{\infty\}$, with on the left
          some important elements of it and on the right the corresponding
          elements through the coordinates $x$ and $y$}
 \label{sp4-transformation_cycles}
 \end{figure}

To obtain~(\ref{sp4-eH_cycles}), it is sufficient to use the explicit expressions of the
branch points~$x_{1},x_{4}$, $y_{1},y_{4}$, see below~(\ref{sp4-def_d_d_tilde}), as well
as the explicit formulation of the uniformization, see~(\ref{sp4-uniformization}).

\medskip

Let us go back to $\widehat{\xi}$ and $\widehat{\eta}$, the automorphisms of the algebraic
curve $\mathscr{Q}$ introduced in Subsection~\ref{cons}. Thanks to the uniformization
(\ref{sp4-uniformization}), they define two automorphisms $\xi$ and $\eta$ on $\mathbb{C}
\cup\{\infty\}$, which are determined by
     \begin{equation}
     \label{sp4-characterization_automorphisms}
          \xi^{2} = 1,\ \ \ \
          x\circ \xi = x,\ \ \ \
          y\circ \xi=\frac{x^{2}}{y},\ \ \ \
          \eta^{2} = 1,\ \ \ \
          y\circ \eta = y,\ \ \ \
          x\circ \eta =\frac{p_{1,-1}y^{2}+p_{1,0}y}{p_{1,0}y+p_{1,-1}}\frac{1}{x}.
     \end{equation}
Using the well-known characterization of the automorphisms of the
Riemann sphere~$\mathbb{C}\cup\{\infty\}$,~(\ref{sp4-uniformization})
and~(\ref{sp4-characterization_automorphisms}),
we obtain that $\xi$ and $\eta$ have the following expressions:
     \begin{equation}
     \label{sp4-def_automorphisms_C_s}
          \xi(z)  = 1/z,\ \ \ \ \
          \eta(z) = \exp(-2\imath \pi/n)/z.
     \end{equation}

The expression above of $\eta$ in terms of $n$ is due to the
assumption~(H3), see Remark~\ref{sp4-assumption_parameters_necessary}
for more details. Note also that leading to these particularly nice
analytic expressions of $\xi$ and $\eta$ is another very pleasant
property of the uniformization $(x,y)$.

As in Subsection~\ref{cons}, we call
the group generated by $\xi$ and $\eta$
     \begin{equation*}
          W_{n}=\left\langle \xi,\eta\right\rangle
     \end{equation*}
the \emph{group of the random walk}. In the context of
this article, $W_{n}$ is isomorphic to
the dihe- dral group of order $2n$, i.e.\ to the
group of symmetries of a regular polygon with $n$ sides,
$\xi$ and $\eta$ playing the role of the two reflections.




\medskip

We are now going to state and prove Proposition~\ref{sp4-continuation_h_h_tilde_covering},
which actually is the main result of Subsection~\ref{sp4-Galois_automorphisms} and that
deals with the continuation of the generating functions $h$ and $\widetilde{h}$ defined
in~(\ref{sp4-def_generating_functions}). For this, we need to describe the action
of the elements of $W_{n}$ on some cones of the plane as well as to find
some fundamental domains of the plane for the action of $W_{n}$---we say that $D$ is a fundamental domain of the plane for
the action of $W_{n}$ if $\cup_{w\in W_{n}}w(D)=\mathbb{C}$
and if in addition the latter union is disjoint.

Let us take the following notation: for $\theta_{1}\leq \theta_{2}$, let
     \begin{equation*}
          \Lambda (\theta_{1},\theta_{2}) = \big\{t \exp(\imath \theta): 0\leq t\leq
          \infty, \theta_{1}\leq \theta\leq \theta_{2}\big\}
     \end{equation*}
be the cone with vertex at $0$
and opening angles $\theta_{1},\theta_{2}$. In particular,
$\Lambda(\theta,\theta)=\exp(\imath \theta)\mathbb{R}_{+}\cup\{\infty\}$.
Thanks to~(\ref{sp4-def_automorphisms_C_s}),
we obtain that the action of $\xi$ and $\eta$ on these cones
is simply given by
$\xi(\Lambda(\theta_{1},\theta_{2}))=\Lambda(-\theta_{2},-\theta_{1})$ and
$\eta(\Lambda(\theta_{1},\theta_{2}))=\Lambda(-\theta_{2}-2\pi/n,-\theta_{1}-2\pi/n)$.
These facts are illustrated on the left of Figure~\ref{sp4-Lambda_D}.

Define now, for $k\in \{0,\ldots ,n\}$,
          \begin{equation*}
               D_{k}^{+}=\Lambda\left(\frac{k-1}{n}\pi,\frac{k}{n}\pi\right),
               \ \ \ \ \ D_{k}^{-}=\Lambda\left(-\frac{k+1}{n}\pi,-\frac{k}{n}\pi\right).
          \end{equation*}
Sometimes, we will write $D_{0}$ instead of $D_{0}^{+}=D_{0}^{-}$ and
$D_{n}$ instead of $D_{n}^{+}=D_{n}^{-}$. Clearly,
     \begin{equation}
     \label{sp4-fundamental_domain_first_step}
          D_{0} \cup D_{n} \cup \bigcup_{k=1}^{n-1}D_{k}^{+}
          \cup \bigcup_{k=1}^{n-1}D_{k}^{-}=\mathbb{C}\cup\{\infty\}.
     \end{equation}
The definitions of $D_{k}^{+}$ and $D_{k}^{-}$, as well
as~(\ref{sp4-fundamental_domain_first_step}), are illustrated
on the right of Figure~\ref{sp4-Lambda_D}.

 \begin{figure}[!ht]
 \begin{center}
 \begin{picture}(425.00,165.00)
 \includegraphics{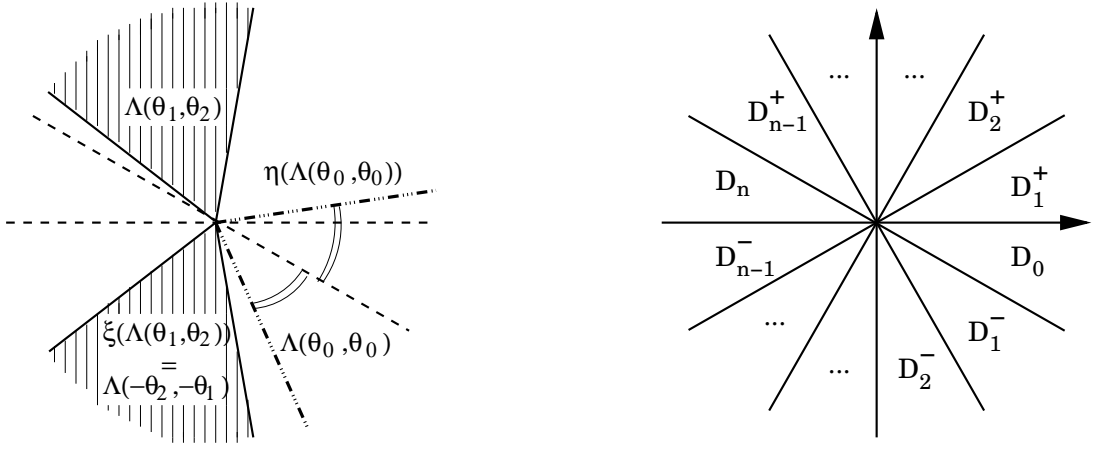}
 \end{picture}
 \end{center}
 \vspace{-3mm}
 \caption{Important cones of the uniformization space}
 \label{sp4-Lambda_D}
 \end{figure}

It is immediate that for any
$k\in\{1,\ldots ,n\}$, we have $D_{k}^{+}=\xi(D_{k-1}^{-})$ and
$D_{k}^{-}=\eta(D_{k-1}^{+})$.
In particular, for any
$2k\in\{1,\ldots ,n\}$,
     \begin{equation*}
          D_{2k}^{+}=\big((\xi\circ\eta)^{k}\big)\big(D_{0}\big),
          \ \ \ \ \ D_{2k}^{-}=\big((\eta\circ\xi)^{k}\big)\big(D_{0}\big).
     \end{equation*}
     Likewise, for any $2k+1\in\{1,\ldots ,n\}$,
     \begin{equation*}
          D_{2k+1}^{+}=\big(\xi\circ(\eta\circ \xi)^{k}\big)\big(D_{0}\big),
          \ \ \ \ \ D_{2k}^{-}=\big(\eta\circ(\xi\circ\eta)^{k}\big)\big(D_{0}\big).
     \end{equation*}

With~(\ref{sp4-fundamental_domain_first_step}), these equalities prove that
$\cup_{w\in W_{n}}w(D_{0})=\mathbb{C}\cup\{\infty\}$, in such a way that
$D_{0}$ is a fundamental domain for the action of $W_{n}$ on $\mathbb{C}$---this is not quite exact, since each half-line
$\Lambda(k\pi/n,k\pi/n)$, $k\in\{0,\ldots,2n-1\}$
appears twice in the union $\cup_{w\in W_{n}}w(D_{0})$.

\medskip

We are now able to state and prove
Proposition~\ref{sp4-continuation_h_h_tilde_covering},
after the weak recall on the lifting of functions that follows:
any function $f$ of the variable $x$ (resp.\ $y$) defined  on some
domain $D\subset \mathbb{C}$ can be lifted on $\{z
\in \mathbb{C}\cup\{\infty\}: x(z)\in D \}$ (resp.\
$\{z \in \mathbb{C}\cup\{\infty\}: y(z)\in D \}$) by
setting $F(z)=f(x(z))$ (resp.\ $F(z)=f(y(z))$).

In particular, we can lift the generating functions $h$ and
$\widetilde{h}$ considered in (\ref{sp4-def_generating_functions}) and we set $H(z)=h(x(z))$ and
$\widetilde{H}(z)=\widetilde{h}(y(z))$; they are well defined on
$\{z\in \mathbb{C}\cup\{\infty\}: |x(z)|\leq 1\}
$ and
$\{z \in\mathbb{C}\cup\{\infty\}: |y(z)|\leq 1\}
$
respectively. Consequently, on $\{z \in \mathbb{C}\cup\{\infty\}: |x(z)|\leq 1,
|y(z)|\leq 1\}$ (which equals $\Lambda(-\pi/2,\pi/2-\pi/n)$,
see Figure~\ref{sp4-transformation_cycles}), the
functional equation~(\ref{sp4-functional_equation})
yields $H(z)+\widetilde{H}(z)-x^{i_{0}}y^{j_{0}}(z)=0$---in the sequel, we will often write $x^{i_{0}}y^{j_{0}}(z)$
instead of $x(z)^{i_{0}}y(z)^{j_{0}}$.


\begin{prop}
\label{sp4-continuation_h_h_tilde_covering}
 The functions $H(z)=h(x(z))$ and $\widetilde{H}(z)=\widetilde{h}(y(z))$
can be meromorphically continued from respectively
$\Lambda(-\pi/2,\pi/2)$ and $\Lambda(-\pi/2-\pi/n,\pi/2-\pi/n)$ up to
respectively $\mathbb{C}\setminus \Lambda(\pi,\pi)$ and
$\mathbb{C}\setminus \Lambda(\pi-\pi/n,\pi-\pi/n)$.
Moreover, these continuations satisfy
     \begin{equation}
     \label{sp4-stab_relations}
          H(z)=
          H(\xi(z)),\ \ \ \ \
          \widetilde{H}(z)=
          \widetilde{H}(\eta(z)),
          \ \ \ \ \ \forall z \in \mathbb{C},
     \end{equation}
and
     \begin{equation*}
          H(z)+\widetilde{H}(z)-
          x^{i_{0}}y^{j_{0}}(z)=
          \left\{\phantom{\begin{array}{ccccccc}
          \displaystyle 0 & \text{if} & z\notin\Lambda(\pi-\pi/n,\pi) \\
          \displaystyle -\sum_{w\in W_{n}}\left(-1\right)^{l\left(w\right)}
          x^{i_{0}}y^{j_{0}}\left(w(z)\right) & \text{if} & z\in \Lambda(\pi-\pi/n,\pi).
          \end{array}}\right.\phantom{ii}\phantom{iii}
     \end{equation*}

\vspace{-23mm}

     \begin{eqnarray}
          \phantom{Q\left(z\right)+\widetilde{Q}\left(z\right)-
          x^{i_{0}}y^{j_{0}}\left(z\right) }&&\hspace{19mm} 0 \ \
          \hspace{26mm} \textit{if}\ \   z\notin\Lambda(\pi-\pi/n,\pi)
          \phantom{iiiiii} \label{sp4-eH_s_s_functions_1}\\
          \phantom{Q\left(z\right)+\widetilde{Q}\left(z\right)-
          x^{i_{0}}y^{j_{0}}\left(z\right)}&&\hspace{-2mm} -\sum_{w\in W_{n}}\left(-1\right)^{l\left(w\right)}
          x^{i_{0}}y^{j_{0}}\left(w(z)\right)\ \ \hspace{-0.2mm} \textit{if}\ \ z\in \Lambda(\pi-\pi/n,\pi)
          \phantom{\mathbb{C}\setminus}
          \phantom{iiiiii} \label{sp4-eH_s_s_functions_2}
     \end{eqnarray}
where $l(w)$ is the length of $w$, i.e.\ the smallest $r$ for
which we can write $w=w_{1}\circ \cdots \circ w_{r}$, with $w_{1},\ldots,w_{r}$
equal to $\xi$ or $\eta$.
\end{prop}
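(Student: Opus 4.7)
The plan is to obtain both the meromorphic continuation and the identities (\ref{sp4-stab_relations})--(\ref{sp4-eH_s_s_functions_2}) by an inductive extension procedure that alternates two elementary operations: (a) the functional equation (\ref{sp4-functional_equation}), lifted through the uniformization, which on the initial common region $\Lambda(-\pi/2,\pi/2-\pi/n)=\{|x(z)|\le 1\}\cap\{|y(z)|\le 1\}$ reads $H(z)+\widetilde{H}(z)=x^{i_0}y^{j_0}(z)$; and (b) the invariances $H\circ\xi=H$ and $\widetilde{H}\circ\eta=\widetilde{H}$, which on the initial domains $\Lambda(-\pi/2,\pi/2)$ and $\Lambda(-\pi/2-\pi/n,\pi/2-\pi/n)$ follow immediately from $x\circ\xi=x$ and $y\circ\eta=y$ (see (\ref{sp4-characterization_automorphisms})) combined with the fact that those initial sectors are $\xi$-stable and $\eta$-stable. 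Because $D_0$ is a fundamental domain for $W_n$, the $2n$ images $w(D_0)$ cover $\mathbb{C}\cup\{\infty\}$, so after finitely many rounds the procedure reaches every sector of the plane.

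First I would use (a) to enlarge the domains: setting $H(z):=x^{i_0}y^{j_0}(z)-\widetilde{H}(z)$ on $\Lambda(-\pi/2-\pi/n,-\pi/2)$ extends $H$ meromorphically to $\Lambda(-\pi/2-\pi/n,\pi/2)$, and symmetrically for $\widetilde{H}$. Both extensions manifestly satisfy $F(z):=H(z)+\widetilde{H}(z)-x^{i_0}y^{j_0}(z)=0$ on this common enlarged domain. Next I would apply $\xi$ (resp.~$\eta$) to extend the domain of $H$ (resp.~$\widetilde{H}$) by one further sector of opening $\pi/n$. Alternating these operations and using $D_k^\pm=(\xi\eta)^\cdot(D_0)$ or $(\eta\xi)^\cdot(D_0)$, after $n-1$ rounds one reaches the claimed maximal domains $\mathbb{C}\setminus\Lambda(\pi,\pi)$ and $\mathbb{C}\setminus\Lambda(\pi-\pi/n,\pi-\pi/n)$, and the invariances (\ref{sp4-stab_relations}) hold everywhere by construction.

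To establish the signed-sum formula (\ref{sp4-eH_s_s_functions_2}) in the final sector $\Lambda(\pi-\pi/n,\pi)$, I would proceed by induction on the length $l(w)$. The identity $F\equiv 0$ is propagated on every fundamental domain $w(D_0)$ reached by the procedure, \emph{except} on the last one, where the two extension chains meet: $H$ arrives there via the upper-half-plane chain $(\xi,\eta,\xi,\ldots)$ applied to its defining formula, and $\widetilde{H}$ arrives via the lower-half-plane chain $(\eta,\xi,\eta,\ldots)$. Because $x^{i_0}y^{j_0}$ is \emph{not} $W_n$-invariant, the residual discrepancy accumulated along the chain is a telescoping sum; tracking at each reflection the contribution $\pm x^{i_0}y^{j_0}(w(z))$, with the sign flipping at each application of $\xi$ or $\eta$, produces exactly $F(z)=-\sum_{w\in W_n}(-1)^{l(w)}x^{i_0}y^{j_0}(w(z))$ on $\Lambda(\pi-\pi/n,\pi)$.

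The main obstacle is the sign and domain bookkeeping, in particular verifying that the two extension chains meet compatibly in the final sector, so that the inductively computed values of $H+\widetilde{H}$ coincide there. This compatibility is ultimately a consequence of the relation $(\xi\eta)^n=\mathrm{id}$ in the dihedral group $W_n$ (itself forced by (H3) via (\ref{sp4-def_automorphisms_C_s})) together with the disjointness of the decomposition (\ref{sp4-fundamental_domain_first_step}); these two facts guarantee that traversing the $2n$ fundamental domains in either cyclic order yields the same accumulated residue.
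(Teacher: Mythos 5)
Your proposal is correct and follows essentially the same route as the paper: the paper also builds the continuations piecewise over the $W_n$-translates of the fundamental domain $D_0$, alternating the lifted functional equation with the pullbacks by $\xi$ and $\eta$, which makes $H+\widetilde{H}-x^{i_0}y^{j_0}$ vanish on every domain except the last one, $D_n=\Lambda(\pi-\pi/n,\pi)$. There the paper, like you, evaluates the accumulated telescoping orbit terms (its formulas for $H_{D_{2k}^+}$ and $\widetilde{H}_{D_{2k}^-}$) and combines the two chains via the dihedral relation and the functional equation on $D_0$ to get the signed sum $-\sum_{w\in W_n}(-1)^{l(w)}x^{i_0}y^{j_0}(w(z))$.
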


\begin{rem}
\label{sp4-continuation_h_h_tilde}
As a consequence of Proposition~\ref{sp4-continuation_h_h_tilde_covering},
the generating functions $h$ and $\widetilde{h}$ can be continued as meromorphic
functions from the unit disc up to $\mathbb{C}\setminus [1,x_{4}]$ and
$\mathbb{C}\setminus [1,y_{4}]$ respectively.
Indeed, the formulas $h(x)=H(z)$ if $x(z)=x$ and $\widetilde{h}(y)=\widetilde{H}(z)$
if $y(z)=y$ define $h$ and $\widetilde{h}$ not ambiguously, thanks to~(\ref{sp4-stab_relations}).
Moreover, since we have $x(\Lambda(\pi,\pi))=[1,x_{4}]$ and
$y(\Lambda(\pi-\pi/n,\pi-\pi/n))=[1,y_{4}]$, see~(\ref{sp4-eH_cycles})
and Figure~\ref{sp4-transformation_cycles},
the previous formulas yield meromorphic continuations on
$\mathbb{C}\setminus [1,x_{4}]$ and $\mathbb{C}\setminus [1,y_{4}]$ respectively.
\end{rem}

\begin{proof}[Proof of Proposition~\ref{sp4-continuation_h_h_tilde_covering}]
To prove Proposition~\ref{sp4-continuation_h_h_tilde_covering}, we shall
heavily use the decomposition~(\ref{sp4-fundamental_domain_first_step}),
and precisely, we are going to define $H$ and $\widetilde{H}$ piecewise, by
defining them on each of the $2n$ domains $D$ that appear in the decomposition
(\ref{sp4-fundamental_domain_first_step}) to be equal to some functions $H_{D}$
and $\widetilde{H}_{D}$; it will then suffice to show that the functions $H$ and
$\widetilde{H}$ so-defined satisfy the conclusions of Proposition
\ref{sp4-continuation_h_h_tilde_covering}.

\medskip

$\ast$ In $D_{0}\subset \{z \in \mathbb{C}\cup\{\infty\}: |x(z)|\leq 1,
|y(z)|\leq 1\}$, we are going to use the most natural way
to define $H_{D_{0}}$ and $\widetilde{H}_{D_{0}}$, i.e.\ their power
series; so for $z\in D_{0}$ we set $H_{D_{0}}(z)=h(x(z))$ and
$\widetilde{H}_{D_{0}}(z)=\widetilde{h}(y(z))$.

\medskip

$\ast$ Then, for $k\in\{1,\ldots,n-1\}$, we define
$H_{D_{k}^{+}}$, $\widetilde{H}_{D_{k}^{+}}$ on $D_{k}^{+}$ and
$H_{D_{k}^{-}}$, $\widetilde{H}_{D_{k}^{-}}$ on $D_{k}^{-}$ by
     \begin{equation*}
          \left.\begin{array}{ccccccc}
          \forall z\in D_{k}^{+}=\xi\left(D_{k-1}^{-}\right)\hspace{-2mm} &: &
          H_{D_{k}^{+}}\left(z\right)=H_{D_{k-1}^{-}}\left(\xi\left(z\right)\right),
          & \widetilde{H}_{D_{k}^{+}}\left(z\right)=
          -H_{D_{k}^{+}}\left(z\right)+x^{i_{0}}y^{j_{0}}\left(z\right),\\
          \forall z\in D_{k}^{-}=\eta\left(D_{k-1}^{+}\right)\hspace{-2mm} &: &
          \widetilde{H}_{D_{k}^{-}}\left(z\right)=
          \widetilde{H}_{D_{k-1}^{+}}\left(\eta\left(z\right)\right), &
          H_{D_{k}^{-}}\left(z\right)=-\widetilde{H}_{D_{k}^{-}}\left(z\right)+
          x^{i_{0}}y^{j_{0}}\left(z\right).
          \end{array}\right.
     \end{equation*}

$\ast$ At last, for $z\in D_{n}$, we set
          $H_{D_{n}}(z)=H_{D_{n-1}^{-}}(\xi(z))$ and
          $\widetilde{H}_{D_{n}}(z)=\widetilde{H}_{D_{n-1}^{+}}(\eta(z))$.

\medskip

Therefore we have, for each of the $2n$ domains $D$ of
the decomposition~(\ref{sp4-fundamental_domain_first_step}),
defined two functions $H_{D}$ and $\widetilde{H}_{D}$. Then,
as said at the beginning of the proof, we set
$H(z)=H_{D}(z)$ and $\widetilde{H}(z)=\widetilde{H}_{D}(z)$
for all $z\in D$ and for all domains $D$ that appear
in~(\ref{sp4-fundamental_domain_first_step}).

\medskip

With this construction,~(\ref{sp4-stab_relations}) and~(\ref{sp4-eH_s_s_functions_1})
are immediately obtained. In order to prove (\ref{sp4-eH_s_s_functions_2}), we can
use the fact that it is possible to express \emph{all} the functions $H_{D}$,
$\widetilde{H}_{D}$ in terms of $H_{D_{0}}$, $\widetilde{H}_{D_{0}}$ and $x^{i_{0}}
y^{j_{0}}$ \textit{only}; we give, e.g., the expression of $H_{D_{2k}^{+}}$,
for any $2k\in\{1,\ldots ,n\}$:
     \begin{equation}
     \label{sp4-expression_H_2k_+}
          H_{D_{2k}^{+}}\left(z\right)=-\widetilde{H}_{D_{0}}
          \big((\eta\circ \xi)^{k}
          (z)\big)+\sum_{p=0}^{k-1}
          x^{i_{0}}y^{i_{0}}\big(\xi\circ (\eta\circ
          \xi)^{p}(z)\big)-\sum_{p=1}^{k-1}
          x^{i_{0}}y^{i_{0}}\big((\eta\circ \xi)^{p}(z)\big),
     \end{equation}
as well as that of $\widetilde{H}_{D_{2k}^{-}}$, for any $2k\in\{1,\ldots ,n\}$:
     \begin{equation}
     \label{sp4-expression_tilde_H_2k_-}
          \widetilde{H}_{D_{2k}^{-}}(z)=-H_{D_{0}}\big((\xi\circ \eta)^{k}(z)\big)+\sum_{p=0}^{k-1}
          x^{i_{0}}y^{i_{0}}\big(\eta\circ (\xi\circ \eta)^{p}(z)\big)-\sum_{p=1}^{k-1}
          x^{i_{0}}y^{i_{0}}\big((\xi\circ \eta)^{p}(z)\big).
      \end{equation}

As a consequence, we get Equation~(\ref{sp4-eH_s_s_functions_2}) for even values of $n$.
Indeed, for this it is actually sufficient first to add $H_{D_{n}}(z)$ and $\widetilde{H}_{D_{n}}(z)$, in other
words the equalities~(\ref{sp4-expression_H_2k_+}) and~(\ref{sp4-expression_tilde_H_2k_-}) above for $k=n/2$,
then to notice that if $n$ is even, $(\xi\circ \eta)^{n/2}=(\eta\circ\xi)^{n/2}$,
next to use that for $z\in D_{n}$, $H_{D_{0}}\big((\xi\circ \eta)^{n/2}(z)\big)+
\widetilde{H}_{D_{0}}\big((\xi\circ \eta)^{n/2}(z)\big)=
x^{i_{0}}y^{j_{0}}\big((\xi\circ \eta)^{n/2}(z)\big)$, see~(\ref{sp4-eH_s_s_functions_1}),
and finally to notice that $W_{n}$ equals
     \begin{equation*}
          \big\{1,\eta\xi,\ldots,(\eta\xi)^{n/2-1},
          \xi\eta,\ldots,(\xi\eta)^{n/2-1},\xi,\ldots,
          \xi(\eta\xi)^{n/2-1},\eta,\ldots,\eta(\xi\eta)^{n/2-1},
          (\xi\eta)^{n/2}\big\}.
     \end{equation*}

Likewise, we could write the expressions of
     \begin{equation*}
          H_{D_{2k}^{-}},\ \
          \widetilde{H}_{D_{2k}^{+}},\ \
          H_{D_{2k+1}^{+}},\ \
          \widetilde{H}_{D_{2k+1}^{+}},\ \
          H_{D_{2k+1}^{-}},\ \
          \widetilde{H}_{D_{2k+1}^{-}},
     \end{equation*}
    in terms  of $H_{D_{0}}$, $\widetilde{H}_{D_{0}}$
and $x^{i_{0}}y^{j_{0}}$
and we would verify that Equation~(\ref{sp4-eH_s_s_functions_2}) is still true for
odd $n$. Proposition~\ref{sp4-continuation_h_h_tilde_covering} is proven.
\end{proof}

\begin{rem}
\label{sp4-assumption_parameters_necessary}
We can now explain precisely why the assumption \textnormal{(H3)}
dealing with the values of the transition probabilities
is both natural and necessary for our study.

If we suppose \textnormal{(H1)} but
no more \textnormal{(H3)}, then the uniformization~(\ref{sp4-uniformization}) is the same, 
with $z_{0}=-[2p_{1,-1}]^{1/2}+\imath [2p_{1,0}]^{1/2}$. The transformations~(\ref{sp4-eH_cycles})
of the important cycles through the uniformization are also still valid and
the automorphism $\xi$ is yet again equal to $\xi(z)=$ $1/z$. As for $\eta$, it equals
$\eta(z)=z_{0}^{2}/z$; in particular, the group of the random walk $W=\langle \xi,\eta\rangle$
is finite if and only if there exists an integer $p$ such that $z_{0}^{2p}=1$. In this case,
if $n$ denotes the smallest of these positive integers $p$, the group $W$ is then of order $2n$.

If a such $n$ does not exist, then there is no hope to find a fundamental domain
for the action of the group $W$, neither to obtain any equality like~(\ref{sp4-eH_s_s_functions_2}).

If a such $n$ exists, then by using the fact that $z_{0}^{2n}=1$, in other
words the fact that $(-[2p_{1,-1}]^{1/2}+\imath [2p_{1,0}]^{1/2})^{2n}=1$, we
immediately obtain that $p_{1,0}=\sin(q\pi/n)^{2}/2$, for some integer $q$ having a
greatest common divisor with $n$ equal to $1$.

In this last case, we have $z_{0}=-\exp(-\imath q\pi/n)$; it is then easily proven
that the domain bounded by the cycles $x^{-1}([1,x_{4}])$ and $y^{-1}([1,y_{4}])$,
namely $\Lambda(\arg(z_{0}),\pi)=\Lambda(\pi-q\pi/n,\pi)$, is a fundamental domain
for the action of $W$ if and only if $q=1$. In particular, having an
equality like~(\ref{sp4-eH_s_s_functions_2}) is possible if and only
if $q=1$, see the proof of Proposition~\ref{sp4-continuation_h_h_tilde_covering}.

But it turns out that having an equality like~(\ref{sp4-eH_s_s_functions_2}) is essential
in what follows, particularly in Section~\ref{sp4-Martin_boundary}, where we have to
know very precisely the behavior of $H(z)+\widetilde{H}(z)-x^{i_{0}}y^{j_{0}}(z)$ near
$0$ and $\infty$. In the general case, the formulations of $H$ and 
$\widetilde{H}$---as solutions of boundary value problems---are so complex that we are not able
to pursue the analysis.

For all these reasons, we assume here that $p_{1,0}=\sin(\pi/n)^{2}/2$
for some integer $n$, in other words nothing else but \textnormal{(H3)}.
\end{rem}

\section{Harmonic functions}
\label{sp4-Harmonic_functions}

Section \ref{sp4-Harmonic_functions} aims at introducing and studying a certain
harmonic function associated with the process, which will be of the highest
importance in the forthcoming Section~\ref{sp4-Martin_boundary}.


It turns out that this harmonic function will be obtained
from the expansion near~$0$ of
$\sum_{w\in W_{n}}(-1)^{l(w)}x^{i_{0}}y^{j_{0}}(w(z))$,
quantity which is appeared in~(\ref{sp4-eH_s_s_functions_2});
 this is why
we begin here by studying closely the behavior of the latter sum
in the neighborhood of $0$.

Note first that thanks to the expression (\ref{sp4-def_automorphisms_C_s})
of the automorphisms $\xi$ and $\eta$, we have
     \begin{equation}
     \label{sp4-quantity_i0_j0}
          \sum_{w\in W_{n}}\left(-1\right)^{l(w)}
          x^{i_{0}}y^{j_{0}}\left(w\left(z\right)\right)=
          \sum_{k=0}^{n-1}\left[
          x^{i_{0}}y^{i_{0}}\left(\exp(-2\imath k \pi/n)z\right)-
          x^{i_{0}}y^{i_{0}}\left(\exp(-2\imath k \pi/n)/z\right)\right].
     \end{equation}

Let us now take the following notations for the expansion at $0$
of the function $x^{i_{0}}y^{j_{0}}$:
     \begin{equation}
     \label{sp4-notation_x_i0_y_j0}
          x^{i_{0}}y^{i_{0}}\left(z\right)=
          \sum_{p\geq 0}\kappa_{p}
          \left(i_{0},j_{0}\right) z^{p},
     \end{equation}
and notice that with~(\ref{sp4-uniformization}), we obtain
that for $z$ close to $0$,
     \begin{equation}
     \label{sp4-expansion_x_i0_y_j0_infinity}
          x^{i_{0}}y^{i_{0}}\left(1/z\right)=
          \sum_{p\geq 0}\overline{\kappa_{p}}
          \left(i_{0},j_{0}\right) z^{p}.
     \end{equation}

In a general setting, if $f$ is holomorphic
in a neighborhood of $0$ with expansion $f(z)=\sum_{p\geq 0}f_{p}z^{p}$,
then $\sum_{k=0}^{n-1}f(\exp(-2\imath k \pi/n)z) =
\sum_{k=0}^{n-1}f(\exp( 2\imath k \pi/n)z) =
\sum_{p\geq 0} n f_{n p}z^{n p}$.

This is why, by using~(\ref{sp4-notation_x_i0_y_j0})
and~(\ref{sp4-expansion_x_i0_y_j0_infinity}), we obtain
that the sum~(\ref{sp4-quantity_i0_j0}) is equal to
     \begin{equation}
     \label{sp4-s_s_expansion_x_i0_y_j0}
          \sum_{w\in W_{n}}\left(-1\right)^{l(w)}
          x^{i_{0}}y^{j_{0}}\left(w\left(z\right)\right)=
          \sum_{p\geq 1} n \left[\kappa_{n p}
          \left(i_{0},j_{0}\right)-\overline{\kappa_{n p}}
          \left(i_{0},j_{0}\right)\right] z^{n p}.
     \end{equation}

We are now going to be interested in the term corresponding to $p=1$
in the sum~(\ref{sp4-s_s_expansion_x_i0_y_j0}), and we set
     \begin{equation}
     \label{sp4-def_f_n}
          f_{n}\left(i_{0},j_{0}\right)=n\frac{\kappa_{n}\left(i_{0},j_{0}\right)-
          \overline{\kappa_{n}}\left(i_{0},j_{0}\right)}
          {\left(-1\right)^{n}\imath}.
     \end{equation}

We are now going to prove Proposition \ref{new_prop_properties_fn},
but before we study some of its consequences.

\begin{cor}
\label{sp4-never_hit}
The Doob $f_{n}$-transform process of $(X,Y)$ never hits
the boundary.
\end{cor}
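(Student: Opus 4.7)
The plan is to derive the corollary as an immediate consequence of Proposition \ref{new_prop_properties_fn}: the three properties recorded there (harmonicity, strict positivity inside the quadrant, vanishing on the axes) are precisely what is required for the Doob transform to be a genuine Markov chain supported on the open quadrant, with no transitions to the boundary.

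First, I would recall the construction of the Doob $f_{n}$-transform. The killed walk $(X,Y)$ is a sub-Markov chain on the interior $E=\{(i,j)\in\mathbb{Z}_{+}^{2}:i\geq 1,\,j\geq 1\}$ with kernel $P((i_{0},j_{0}),(i,j))=p_{i-i_{0},j-j_{0}}$; the defect of stochasticity at $(i_{0},j_{0})$ is exactly the probability of jumping onto one of the absorbing half-lines. By definition, the $f_{n}$-transform is then the chain on $E$ with kernel
\[
P^{f_{n}}\bigl((i_{0},j_{0}),(i,j)\bigr)=\frac{f_{n}(i,j)}{f_{n}(i_{0},j_{0})}\,p_{i-i_{0},j-j_{0}},
\]
which is well defined on $E$ thanks to items \ref{sp4-it_harmo} and \ref{sp4-it_positive} of Proposition \ref{new_prop_properties_fn}.

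The second step is to verify that $P^{f_{n}}$ is stochastic on $E$ and assigns mass zero to every boundary state. Using item \ref{sp4-it_zero_boundary} to extend the summation from $E$ to the whole of $\mathbb{Z}_{+}^{2}$ without altering its value (since $f_{n}$ vanishes on the axes), the harmonicity of $f_{n}$ (item \ref{sp4-it_harmo}) yields $\sum_{(i,j)\in E}P^{f_{n}}((i_{0},j_{0}),(i,j))=f_{n}(i_{0},j_{0})/f_{n}(i_{0},j_{0})=1$; meanwhile, for any boundary point $(i,0)$ or $(0,j)$ the numerator $f_{n}(i,0)$ or $f_{n}(0,j)$ vanishes, so the transformed kernel assigns probability zero to any one-step move from the interior onto the boundary. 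An immediate induction on the number of steps then shows that the $f_{n}$-transform, started inside, never reaches the boundary, which is exactly the statement of the corollary.

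There is no real obstacle here: the entire substance of the corollary is packed into Proposition \ref{new_prop_properties_fn}, whose proof is the true work and is postponed to Section \ref{sp4-Harmonic_functions}.
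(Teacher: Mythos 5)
Your proof is correct and follows exactly the route the paper intends: the corollary is stated as an immediate consequence of Proposition \ref{new_prop_properties_fn}, with harmonicity giving stochasticity of the transformed kernel, positivity making it well defined on the interior, and the vanishing of $f_{n}$ on the axes killing all one-step transitions to the boundary. The paper gives no separate argument beyond this, so your write-up simply makes the implicit reasoning explicit.
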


\begin{rem}
The function $f_{n}$ defined in (\ref{sp4-def_f_n}) is quite explicit.
Indeed, by using the Cauchy product, $\kappa_{n}$ (and therefore also
$f_{n}$ via (\ref{sp4-def_f_n})) can be written in terms of the coefficients of the expansions
of $x$ and $y$ at $0$, and these coefficients are easily calculated,
see Equation~(\ref{sp4-explicit_expansions_x_y}).~We don't know the factorized expression of $f_{n}$ for general values of $n$.
However, we have given in Subsection \ref{sub_harmo_bm}, as examples, the factorized forms of $f_{3}$, $f_{4}$ and $f_{6}$.
\end{rem}

\begin{rem}
The explicit expression and the harmonicity of the function $f_{4}$
have already been obtained by Biane in~\cite{Bi3}.   

The quantity $f_{4}(i_{0},j_{0})$ also appears as a multiplicative factor
in the asymptotic tail~dis- tribution of the hitting time of the boundary of
$\mathbb{Z}_{+}^{2}$ for the process $(X,Y)$ associated~with $n=4$ and starting
from the initial state $(i_{0},j_{0})$. Indeed, in \cite{DO,mythesis},
denoting by $\tau=\inf\{ k\geq 0: X(k)=0$ or $Y(k)=0\}$, it is proven that
$\mathbb{P}_{(i_{0},j_{0})}[\tau>k]\sim C f_{4}(i_{0},j_{0})/k^{2}$, where $C>0$.


In particular, we can specify Corollary~\ref{sp4-never_hit} in the case $n=4$.
Indeed, using the following equality for $l<k$ (obtained from the strong
Markov property of the process $(X,Y)$):
     \begin{equation*}
          \mathbb{P}_{\left(i_{0},j_{0}\right)}
          \left[\left(X\left(l\right),Y\left(l\right)\right)=
          \left(i,j\right) | \tau>k\right]=
          \mathbb{P}_{\left(i_{0},j_{0}\right)}
          \left[\left(X\left(l\right),Y\left(l\right)\right)=
          \left(i,j\right)\right]
          \frac{\mathbb{P}_{\left(i,j\right)}\left[\tau>k-l\right]}
          {\mathbb{P}_{\left(i_{0},j_{0}\right)}\left[\tau>k\right]},
     \end{equation*}
the asymptotic of \cite{DO,mythesis} yields that the Doob
$f_{4}$-transform process is equal in distribution to the limit as
$k\to\infty$ of the process conditioned on the event
$[\tau>k]$.
\end{rem}

\begin{rem}
Let $n\geq 3$. Proposition~\ref{new_prop_properties_fn} gives that
there exists at least one positive~harmonic function for the
process $(X,Y)$. Corollary~\ref{sp4-corollary_Martin_boundary}
entails that $f_{n}$ is in fact
the unique---up to the positive multiplicative constants---positive harmonic function for  $(X,Y)$.
\end{rem}

\begin{proof}[Proof of Proposition~\ref{new_prop_properties_fn}.]
The fact that $f_{n}$ takes real values is immediate from its definition.
For the rest of the proof of~\ref{sp4-it_polynomial}, we are going to use the
following straightforward fact: for any $f(z)=1+\sum_{p\geq 1}f_{p,1}z^{p}$,
note $1+\sum_{p\geq 1}f_{p,i}z^{p}$ the expansion at $0$ of
$f(z)^{i}$; then $f_{p,i}$ is a polynomial of
degree equal or less than $p$ in $i$, with dominant term
equal to $f_{1,1}^{p} i^{p}/p !$. In particular,
$f_{p,i}$ is of degree exactly $p$ if and only if $f_{1,1}\neq 0$.

In our case, it is immediate from~(\ref{sp4-uniformization})
that $\kappa_{1}(1,0)=-4\cos(\pi/n)\neq 0$ and~$\kappa_{1}(0,1)
=-4\exp(\imath \pi/n)\neq 0$. This is why, for any non-negative integer $p$,
$\kappa_{p}(i,0)$ is a polynomial of degree $p$ in $i$;
likewise, $\kappa_{p}(0,j)$ is a polynomial of degree $p$ in $j$.
In particular,~$\kappa_{n}(i,j)=$ $\sum_{p=0}^{n}\kappa_{p}(0,j)\kappa_{n-p}(i,0)$
is a polynomial in $i,j$ of degree $n$, with dominant term equal to
     \begin{equation*}
          \sum_{p=0}^{n}\frac{\kappa_{1}(0,1)^{p}}{p !}j^{p}
          \frac{\kappa_{1}(1,0)^{n-p}}{(n-p) ! }i^{n-p}.
     \end{equation*}
In this way, we obtain that
$f_{n}$ is a polynomial in $i,j$ of degree $n$, with
dominant term equal to (after simplification)
          \begin{equation}
          \label{sp4_future_dominant}
               \frac{2^{2n+1}}{(n-1)!}
               \sum_{p=1}^{n-1}C_{n}^{p} \sin(p\pi/n)\cos(\pi/n)^{n-p}
               j^{p} i^{n-p}.
          \end{equation}
Assertion \ref{sp4-it_polynomial} follows then immediately.

To prove~\ref{sp4-it_harmo}, it is enough to
show that $\kappa_{n}$
is harmonic. To show that, start by using the obvious equality
$x^{i-1}y^{j-1}(z)Q(x(z),y(z))=0$, which reads
$x^{i}y^{j}(z)=p_{1,0}x^{i+1}y^{j}(z)+
p_{1,0}x^{i-1}y^{j}(z)+p_{1,-1}x^{i+1}y^{j-1}(z)
+p_{1,-1}x^{i-1}y^{j+1}(z)$.
Then, (\ref{sp4-notation_x_i0_y_j0}) yields~that
     \begin{equation*}
          \sum_{p\geq 0}\left[
          \kappa_{p}(i,j)\hspace{-0.7mm}-\hspace{-0.7mm}p_{1,0}\kappa_{p}(i\hspace{-0.7mm}+\hspace{-0.7mm}1,j)
          \hspace{-0.7mm}-\hspace{-0.7mm}p_{1,0}\kappa_{p}(i\hspace{-0.7mm}-\hspace{-0.7mm}1,j)\hspace{-0.7mm}-\hspace{-0.7mm}
          p_{1,-1}\kappa_{p}(i\hspace{-0.7mm}+\hspace{-0.7mm}1,j\hspace{-0.7mm}-\hspace{-0.7mm}1)
          \hspace{-0.7mm}-\hspace{-0.7mm}p_{1,-1}\kappa_{p}(i\hspace{-0.7mm}-\hspace{-0.7mm}1,j\hspace{-0.7mm}+\hspace{-0.7mm}1)\right]z^{p}
     \end{equation*}
is identically zero; this means that all the $\kappa_{p}$, $p\geq 0$
are harmonic, hence in particular $\kappa_{n}$.


In order to prove~\ref{sp4-it_zero_boundary}, we need to know explicitly
the expansions of $x$ and $y$ at $0$. From (\ref{sp4-uniformization}), we
immediately obtain these expansions:
     \begin{equation}
     \label{sp4-explicit_expansions_x_y}
          x(z)=1+\frac{4}{\tan(\pi/n)}
          \sum_{p\geq 1}\left(-1\right)^{p}
          \sin\left(p\pi/n\right)z^{p},\ \ \ \
          y(z)=1+4\sum_{p\geq 1}\left(-1\right)^{p}
          p\exp\left(\imath p\pi/n\right)z^{p}.
     \end{equation}

We show now the first part of~\ref{sp4-it_zero_boundary}, namely the fact
that $f_{n}(i,0)=0$ for all non-negative integer $i$.
As it can be noticed from~(\ref{sp4-explicit_expansions_x_y}), the
coefficients of $x$ are real. For this reason, for
all integers $i$ and $p$,
$\kappa_{p}(i,0)$ is also real and thus $f_{p}(i,0)=0$;
in particular, $f_{n}(i,0)=0$.

As for the second part of~\ref{sp4-it_zero_boundary}, namely the fact
that for all $j\geq 0$, $f_{n}(0,j)=0$, we prove
that $\kappa_{n}(0,j)$ is real---however, it isn't true that
for all $j$ and $p$, $\kappa_{p}(0,j)$ is real.

In order to obtain $\kappa_{p}(0,j)$---that is, the $p$th coefficient of the Taylor series of $y(z)^{j}$---we add all the terms of the form $\kappa_{p_{1}}(0,1)\kappa_{p_{2}}(0,1)\times\cdots \times\kappa_{p_{j}}(0,1)$
with $p_{1}+\cdots +p_{j}=p$, this is nothing else but the Cauchy's product
of the $j$ series $y(z)$.
In other words, using~(\ref{sp4-explicit_expansions_x_y}),
we add terms of the form $p_{1}\times \cdots\times p_{j}(-1)^{p_{1}+\cdots +p_{j}}
\exp(\imath [p_{1}+\cdots +p_{j}]\pi/n)$. As a consequence, $\kappa_{p}(0,j)$ can be written as
$\varphi_{p}(j)(-1)^{p}\exp(\imath p\pi/n)$, with $\varphi_{p}(j)>0$ if $j>0$.

In the particular case $p=n$, we obtain
$\kappa_{n}(0,j)=-\varphi_{n}(j)(-1)^{n}$;
$\kappa_{n}(0,j)$ is therefore real and, immediately, $f_{n}(0,j)=0$.

We prove now~\ref{sp4-it_positive}.
With~(\ref{sp4-explicit_expansions_x_y}),
it is clear that the sequence $\kappa_{0}(1,0),\ldots ,\kappa_{n-1}(1,0)$ is
alternating, in the sense that for all $p\in\{0,\ldots ,n-1\}$, $(-1)^{p}\kappa_{p}(1,0)>0$.
In particular, it follows from general results on power series that the sequence
$\kappa_{0}(i,0),\ldots ,\kappa_{n-1}(i,0)$ is still alternating, for any $i>0$.

In addition, by using the Cauchy's product of $x(z)^{i}$ and $y(z)^{j}$, we obtain
that~$
          \kappa_{n}(i,j)=
          \kappa_{n}(i,0)+\kappa_{n}(0,j)+
          \sum_{p=1}^{n-1}(-1)^{p}
          \exp(\imath p\pi/n)\varphi_{p}(j)
          \kappa_{n-p}(i,0)$.
Then, by definition of $f_{n}(i,j)$ and by using the fact that $\kappa_{n}(i,0)$
and $\kappa_{n}(0,j)$ are real, we get
     \begin{equation*}
          f_{n}(i,j)=2 n \left(-1\right)^{n}
          \sum_{p=1}^{n-1}\left(-1\right)^{p}
          \sin\left(p\pi/n\right)\varphi_{p}(j)
          \kappa_{n-p}(i,0).
     \end{equation*}
     But we have already proven that $\varphi_{p}(j)>0$ if $j>0$
and that $(-1)^{n-p}\kappa_{n-p}(i,0)>0$ if $i>0$; above, $f_{n}$ is
thus written as the sum of $n-1$ positive terms,
and is, therefore, positive.
\end{proof}

\begin{proof}[Proof of Proposition~\ref{prop_link}]
Recall that $h(\rho\exp(\imath \theta))
=\rho^{n}\sin(n\theta)$, see Subsection \ref{sub_harmo_bm}.
 Setting $u=\rho \cos(\theta)$ and
 $v=\rho \sin(\theta)$ then yields:
     \begin{equation*}
          h(u,v)=\sum_{p=0}^{(n-1)/2}C_{n}^{2p+1}
          (-1)^{p}u^{n-(2p+1)}v^{2p+1}.
     \end{equation*}
Next we easily check that up to a multiplicative constant,
$h(\phi(i_0,j_0))$ equals the~dominant term of $f_{n}(i_0,j_0)$,
namely $(2^{2n+1}/(n-1)!)
          \sum_{p=1}^{n-1}C_{n}^{p} \sin(p\pi/n)\cos(\pi/n)^{n-p}
          i_0^{p} j_0^{n-p}$, see~(\ref{sp4_future_dominant}).
\end{proof}

\begin{proof}[Proof of Proposition~\ref{prop_non_homo}]
Since $f_n$ is a polynomial of degree exactly $n$,
see \ref{sp4-it_polynomial} of Proposition~\ref{new_prop_properties_fn},
it is clearly enough to prove that for $n\geq 5$, $f_n(2,2)/f_n(1,1)\neq 2^n$.
For this we shall find both $f_n(1,1)$ and $f_n(2,2)$ in terms of $n$,
it will then be manifest that $f_n(2,2)/f_n(1,1)\neq 2^n$.

Let us first prove that $f_n(1,1)=8n^{3}/\tan(\pi/n)$. The Cauchy product
of $x(z)$ by $y(z)$ and the use of (\ref{sp4-explicit_expansions_x_y}) yields
     \begin{equation*}
          \kappa_{n}(1,1)=4(-1)^{n+1}n+[16(-1)^n/\tan(\pi/n)]\sum_{p=1}^{n-1}
          p\exp(\imath p \pi/n)\sin((n-p)\pi/n).
     \end{equation*}
Using then the exponential expression of $\sin((n-p)\pi/n)$ as well as
the identity $\sum_{p=1}^{n-1}px^{p}=[nx^n(x-1)-x(x^n-1)]/[(x-1)^2]$
applied to $x=\exp(2\imath \pi/n)$ entails:
     \begin{equation*}
          \kappa_{n}(1,1)=4(-1)^{n+1}n+[8(-1)^n/(\imath \tan(\pi/n))][-n(n-1)/2+n
          \exp(-\imath \pi/n)/(2\imath \sin(\pi/n))].
     \end{equation*}
Using (\ref{sp4-def_f_n}) finally gives the announced value of $f_n(1,1)$.

Thanks to calculations of the same kind than for $f_n(1,1)$, we reach
the conclusion that $f_n(2,2)=(16/3)n^3[n^2+2-6/\tan(\pi/n)^2]/[\sin(\pi/n)^2\tan(\pi/n)]$. Then it becomes obvious that
for $n\geq 5$, $f_n(2,2)/f_n(1,1)\neq 2^n$, which completes the proof of
Proposition~\ref{prop_non_homo}.
\end{proof}

\section{Asymptotic of the Green functions}
\label{sp4-Martin_boundary}

\textit{\textbf{{Sketch of the proof of Theorem~\ref{sp4-Main_theorem_Green_functions}.}}}
We shall begin by expressing $G_{i,j}$
as a double integral, using for this Cauchy's formulas and
(\ref{sp4-functional_equation}), see Equation~(\ref{sp4-application_Cauchy_formula}). Then we will make the change of variable
given by the uniformization~(\ref{sp4-uniformization}) and we will apply the residue
theorem; in this way, we will write $G_{i,j}$ as the sum $G_{i,j,1}+G_{i,j,2}$
of two single integrals w.r.t.\ the uniformization variable but on two contours {a priori}
different, see~(\ref{sp4-n_f_G_i_j_1}) and (\ref{sp4-n_f_G_i_j_2}). Then
we will show, using Cauchy's theorem and
Proposition \ref{sp4-continuation_h_h_tilde_covering}, that it is possible
to move these contours of integration until having the same contours
for both integrals $G_{i,j,1}$ and $G_{i,j,2}$. Finally, by 
using~(\ref{sp4-eH_s_s_functions_2})~we will obtain~(\ref{sp4-final_formulation_G_i_j}), which is the most important explicit formulation of the
$G_{i,j}$, starting from which we will get their asymptotic. In~(\ref{sp4-final_formulation_G_i_j}),
$G_{i,j}$ will be written as an integral on the contour $\exp(\imath \theta)\mathbb{R}_{+}\cup\{\infty\}$,
for some $\theta\in[\pi-\pi/n,\pi]$.

After having chosen an appropriate value of $\theta\in[\pi-\pi/n,\pi]$, see~(\ref{sp4-def_rho_j/i}),
we will see~that this is quite normal to decompose the contour into three
parts, namely a neighborhood of $0$, one of $\infty$ and an intermediate part.
Indeed, the function $x(z)^{i} y(z)^{j}$ that appears in the integrand
of~(\ref{sp4-final_formulation_G_i_j}) is, on the contour $\exp(\imath \theta)
\mathbb{R}_{+}\cup\{\infty\}$, close to $1$ near $0,\infty$ and strictly
larger than $1$ elsewhere. Next, we will study successively these contributions in three paragraphs,
using for this essentially the Laplace's method,
what will conclude the proof of Theorem \ref{sp4-Main_theorem_Green_functions}.

\medskip

\medskip

{\raggedright \textit{\textbf{Beginning of the proof of Theorem~\ref{sp4-Main_theorem_Green_functions}.}}}
Equation~(\ref{sp4-functional_equation}) yields immediately that the gene- rating function
$G$ of the Green functions is holomorphic in $\{(x,y)\in \mathbb{C}^{2}: |x|<1, |y|<1\}$.
As a consequence and using again Equation~(\ref{sp4-functional_equation}), the Cauchy's
formulas allow us to write its coefficients $G_{i,j}$ as the following double integrals:
     \begin{equation}
     \label{sp4-application_Cauchy_formula}
          G_{i,j}=\frac{1}{\left[2\pi \imath \right]^{2}}
          \iint_{\substack{\left|x\right|=1 \\\left|y\right|=1 }}
          \frac{G\left(x,y\right)}{x^{i}y^{j}}\text{d}x\text{d}y
          = \frac{1}{\left[2\pi \imath \right]^{2}}
          \iint_{\substack{\left|x\right|=1 \\\left|y\right|=1 }}
          \frac{h\left(x\right)+\widetilde{h}
          \left(y\right)-x^{i_{0}}y^{j_{0}}}
          {x^{i}y^{j}Q\left(x,y\right)}\text{d}x\text{d}y,
     \end{equation}
where the circles $\{|x|=1\}=\{|y|=1\}=\{\exp(\imath \theta): \theta\in[0,2\pi[\}$
are orientated according~to the increasing values of $\theta$.

With~(\ref{sp4-application_Cauchy_formula}), we can thus write $G_{i,j}$ as the sum
$G_{i,j}=G_{i,j,1}+G_{i,j,2}$, where
      \begin{align*}
          G_{i,j,1}&= \frac{1}{\left[2\pi \imath \right]^{2}}
          \int_{\left|x\right|=1}
          \frac{h\left(x\right)}{x^{i}}\int_{\left|y\right|=1}
          \frac{\text{d}y}{y^{j}Q\left(x,y\right)}\text{d}x,
          \phantom{+
          \frac{1}{\left[2\pi \imath \right]^{2}}
          \int_{\left|y\right|=1}
          \frac{1}{y^{j-j_{0}}}\int_{\left|x\right|=1}
          \frac{\text{d}x}{x^{i-i_{0}}Q\left(x,y\right)}\text{d}y\hspace{0.6mm}} \\
          G_{i,j,2} &=\frac{1}{\left[2\pi \imath \right]^{2}}
          \int_{\left|y\right|=1}
          \frac{\widetilde{h}\left(y\right)}{y^{j}}\int_{\left|x\right|=1}
          \frac{\text{d}x}{x^{i}Q\left(x,y\right)}\text{d}y+
          \frac{1}{\left[2\pi \imath \right]^{2}}
          \int_{\left|y\right|=1}
          \frac{1}{y^{j-j_{0}}}\int_{\left|x\right|=1}
          \frac{\text{d}x}{x^{i-i_{0}}Q\left(x,y\right)}\text{d}y.
     \end{align*}

We are now going to make in $G_{i,j,1}$ the change of variable $x=x(z)$.
For this, we notice that if $\Lambda(\pi/2,\pi/2)=\{\imath t: t\in [0,\infty]\}$
is orientated according to increasing values of $t$, then the
equality $x(\Lambda(\pi/2,\pi/2))=-\{|x|=1\}$ holds---in the sense of the orientated contours---,
see (\ref{sp4-eH_cycles}) and Figure~\ref{sp4-transformation_cycles}. In this way and
using in addition the identity $q(x(z))=H(z)$, we get
     \begin{equation*}
          G_{i,j,1}= -\frac{1}{\left[2\pi \imath \right]^{2}}
          \int_{\Lambda(\pi/2,\pi/2)}
          \frac{H\left(z\right)}{x(z)^{i}}\int_{\left|y\right|=1}
          \frac{\text{d}y}{y^{j}Q\left(x(z),y\right)}x'(z)\text{d}z.
     \end{equation*}
     But $Q(x(z),y)=0$ if and only if $y\in\{y(z),x(z)^{2}/y(z)\}$,
see~(\ref{sp4-characterization_automorphisms}). Moreover,
if~$z$~belongs to $\Lambda(\pi/2,\pi/2)\setminus \{0,\infty\}$, then
$|y(z)|>1$, see Figure~\ref{sp4-transformation_cycles}. The residue
theorem at infinity therefore entails that for such $z$,
$\int_{|y|=1}\text{d}y/[y^{j}Q(x(z),y)]=-2\pi \imath /
[y(z)^{j}\partial_{y}Q(x(z),y(z))]$.
Finally, we have proven that
     \begin{equation}
     \label{sp4-n_f_G_i_j_1}
          G_{i,j,1} = \frac{1}{2\pi \imath }\int_{\Lambda(\pi/2,\pi/2)}
          \frac{H\left(z\right)}
          {x\left(z\right)^{i}
          y\left(z\right)^{j}}
          \frac{x'\left(z\right)}
          {\partial_{y}Q(x(z),y(z))}\text{d}z.
     \end{equation}

A similar reasoning yields
     \begin{equation}
     \label{sp4-n_f_G_i_j_2}
          G_{i,j,2}=-\frac{1}{2\pi \imath }
          \int_{\Lambda(-\pi/2-\pi/n,-\pi/2-\pi/n)}
          \frac{\widetilde{H}\left(z\right)-
          x\left(z\right)^{i_{0}}y\left(z\right)^{j_{0}}}
          {x\left(z\right)^{i}y\left(z\right)^{j}}
          \frac{y'\left(z\right)}
          {\partial_{x}Q(x(z),y(z))}\text{d}z.
     \end{equation}

We are now going to explain why it is possible
to move the contours of integration of
both integrals~(\ref{sp4-n_f_G_i_j_1})
and~(\ref{sp4-n_f_G_i_j_2})
up to $\Lambda(\theta,\theta)$, for any
$\theta\in[\pi-\pi/n,\pi]$---see Figure~\ref{sp4-choice_contour} below.

Start by considering $G_{i,j,1}$ in~(\ref{sp4-n_f_G_i_j_1}).
Thanks to Cauchy's theorem, it is sufficient
to show that the integrand of $G_{i,j,1}$ is
holomorphic inside of $\Lambda(\pi/2,\pi)$,
domain which is
horizontally hatched on Figure~\ref{sp4-choice_contour};
let us thus prove this fact.

On one hand, with~(\ref{sp4-def_Q})
and~(\ref{sp4-uniformization}), on the domain $\Lambda(\pi/2,\pi)$ we get $x'(z)/\partial_{y}Q(x(z),y(z))=
-\imath/(2[p_{1,0} p_{1,-1}]^{1/2}z)$,
and the latter function has manifestly no pole.
On the other hand, it is possible to deduce from
the proof of Proposition~\ref{sp4-continuation_h_h_tilde_covering}
that the only poles of $H$ are at $z_{0}$ and $\overline{z_{0}}$.
In particular, using~(\ref{sp4-uniformization}), we obtain that
for $i$ or $j$ large enough, $H(z)/[x(z)^{i}y(z)^{j}]$
has no pole in $\Lambda(\pi/2,\pi)$. Therefore,
for $i$ or $j$ large enough, the integrand of
$G_{i,j,1}$ has no pole in $\Lambda(\pi/2,\pi)$
and we can thus move the contour from
$\Lambda(\pi/2,\pi/2)$ to $\Lambda(\theta,\theta)$,
for any $\theta\in [\pi/2,\pi]$. Note that it isn't possible
to move the contour beyond $\Lambda(\pi,\pi)$, since $\Lambda(\pi,\pi)$
is a singular curve for $H$---indeed, remember that
$\Lambda(\pi,\pi)=x^{-1}([1,x_{4}])$ and see
Proposition~\ref{sp4-continuation_h_h_tilde_covering}.

By similar considerations, we obtain that it is possible
to move the initial contour of integration of $G_{i,j,2}$ up
to $\Lambda(\theta,\theta)$, for any $\theta\in [\pi-\pi/n,3\pi/2-\pi/n]$.

In particular, if we wish to have the same contour of
integration for $G_{i,j,1}$ and $G_{i,j,2}$, we can choose
$\Lambda(\theta,\theta)$, for any $\theta\in[\pi-\pi/n,\pi]
=[\pi/2,\pi]\cap [\pi-\pi/n,3\pi/2-\pi/n]$.

 \begin{figure}[!ht]
 \begin{center}
 \begin{picture}(290.00,170.00)
 \includegraphics{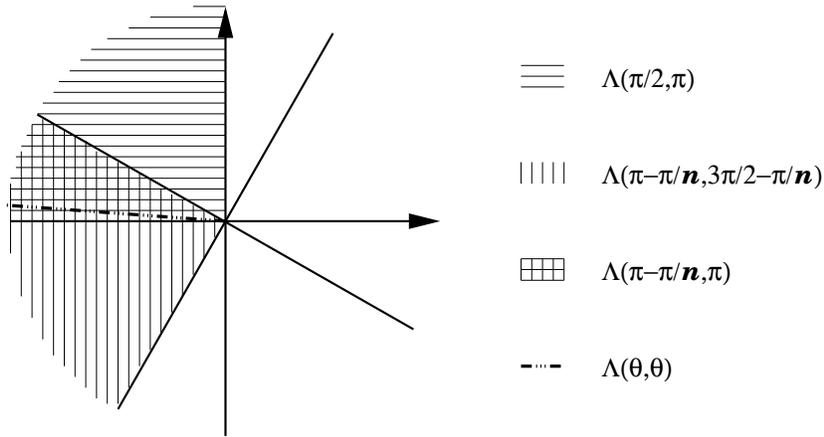}
 \end{picture}
 \end{center}
 \caption{Change of the contours of integration in the
          integrals~(\ref{sp4-n_f_G_i_j_1}) and~(\ref{sp4-n_f_G_i_j_2})}
 \label{sp4-choice_contour}
 \end{figure}

Using then the equality $x'(z)/\partial_{y}Q(x(z),y(z))=-
y'(z)/\partial_{x}Q(x(z),y(z))$,
that comes from differentiating $Q(x(z),y(z))=0$, as well as
(\ref{sp4-n_f_G_i_j_1}),~(\ref{sp4-n_f_G_i_j_2}) and~(\ref{sp4-eH_s_s_functions_2})---we can use (\ref{sp4-eH_s_s_functions_2}) since $\theta\in [\pi-\pi/n,\pi]$
and thus $\Lambda(\theta,\theta)\subset\Lambda(\pi-\pi/n,\pi)$---,
we obtain the following final explicit formulation for $G_{i,j}$, $\theta$ being
any angle in $[\pi-\pi/n,\pi]$:
     \begin{equation}
     \label{sp4-final_formulation_G_i_j}
          G_{i,j}=\frac{1}{4\pi [p_{1,0} p_{1,-1}]^{1/2} }
          \int_{\Lambda\left(\theta,\theta\right)}
          \left[\frac{1}{z}\sum_{w\in W_{n}}\left(-1\right)^{l(w)}
          x^{i_{0}}y^{j_{0}}\left(w\left(z\right)\right)\right]
          \frac{1}{x\left(z\right)^{i}
          y\left(z\right)^{j}}\text{d}z.
     \end{equation}

On the contour $\Lambda(\theta,\theta)\subset \Lambda(\pi-\pi/n,\pi)$,
the modulus of the function $x(z)^{i}y(z)^{j}$ is larger than $1$, see
Figure~\ref{sp4-transformation_cycles}. Moreover, it goes to $1$
when and only when $z$ goes to $0$ or to $\infty$. This is why it
seems normal to decompose the contour $\Lambda(\theta,\theta)$ into a part near $0$, another near $\infty$
and the residual part, and to think that the parts near $0$ and $\infty$ will lead
to the asymptotic of $G_{i,j}$, while the residual part will lead to a
negligible contribution. But how to find the best contour in order to achieve this idea?
In other words, how to find the value of $\theta\in[\pi-\pi/n,\pi]$ for which the calculation of the
asymptotic of~(\ref{sp4-final_formulation_G_i_j}) will be the easiest?

For this, we are going to consider with details the function $x(z)^{i}y(z)^{j}$,
or equivalently
     \begin{equation*}
          \chi_{j/i}(z)=\ln(x(z))+(j/i)\ln(y(z)).
     \end{equation*}

Incidentally this is why, from now on, we suppose that $j/i\in[0,M]$, for some
$M<\infty$. Indeed, the function $\chi_{j/i}$ is manifestly not adapted
to the values $j/i$ going to $\infty$; for~such $j/i$, we will consider, later,
the function $(i/j)\chi_{j/i}(z)=(i/j)\ln(x(z))+\ln(y(z))$. Nevertheless,
$M$ can be so large as wished and in what follows, we assume that some $M>0$ is~fixed.

With~(\ref{sp4-uniformization}), we easily obtain the explicit expansion of
$\chi_{j/i}$ at $0$:
     \begin{equation}
     \label{sp4-expansion_chi_0}
          \chi_{j/i}\left(z\right)=\sum_{p\geq 0}
          \nu_{2p+1}\left(j/i\right)z^{2p+1},\ \ \ \ \
          \nu_{2p+1}\left(j/i\right)=\frac{2}{2p+1}
          \left[{z_{0}}^{2p+1}+\overline{z_{0}}^{2p+1}
          +2\left(j/i\right)\overline{z_{0}}^{2p+1}\right].
     \end{equation}
Likewise, again with~(\ref{sp4-uniformization}), we get that for $z$
near $\infty$, $\chi_{j/i}(z)=\sum_{p=0}^{\infty}
\overline{\nu_{2p+1}}(j/i)1/z^{2p+1}$.

Consider now the steepest descent path associated with
$\chi_{j/i}$, in other words the function $z_{j/i}(t)$
defined by $\chi_{j/i}(z_{j/i}(t))=t$. By inverting
the latter equality, we easily obtain that the half-line
$(1/\nu_{1}(j/i))\mathbb{R}_{+}\cup\{\infty\}$ is tangent at $0$ and at $\infty$
to this steepest descent path.

Let us now set
     \begin{equation}
     \label{sp4-def_rho_j/i}
          \rho_{j/i}=1/\nu_{1}\left(j/i\right)=
          1/\left[2\left({z_{0}}+\overline{z_{0}}+
          2\left(j/i\right)\overline{z_{0}}\right)\right].
     \end{equation}

With this notation, we now answer the question asked above,
that dealt with the fact of finding the value of $\theta$ for which
the asymptotic of the Green functions~(\ref{sp4-final_formulation_G_i_j})
will be the most easily calculated: we choose $\theta=\arg(\rho_{j/i})$---note that, from the definition of $z_{0}$ and (\ref{sp4-def_rho_j/i}),
we immediately obtain that
$\arg(\rho_{j/i})\in[\pi-\pi/n,\pi]$---, and the decomposition of
the contour $\Lambda(\theta,\theta)$ is
     \begin{equation*}
          \Lambda\left(\arg(\rho_{j/i}),\arg(\rho_{j/i})\right)=
          \left(\rho_{j/i}/|\rho_{j/i}|\right)\left[0,\epsilon\right]
          \cup \left(\rho_{j/i}/|\rho_{j/i}|\right)\left]\epsilon,1/\epsilon\right[
          \cup \left(\rho_{j/i}/|\rho_{j/i}|\right)\left[1/\epsilon,\infty\right].
     \end{equation*}
     According to this decomposition and to~(\ref{sp4-final_formulation_G_i_j}),
we consider now $G_{i,j}$ as the sum of three terms and we are going to
study successively the contribution of each of these terms.

\medskip

\medskip

{\raggedright \textit{\textbf{Contribution of the neighborhood of $0$.}}}
In order to evaluate the asymptotic of the integral (\ref{sp4-final_formulation_G_i_j})
on the contour $(\rho_{j/i}/|\rho_{j/i}|)[0,\epsilon]$, we are going to use the
expansion at $0$ of the function
     \begin{equation*}
          \frac{1}{z}\sum_{w\in W_{n}}\left(-1\right)^{l(w)}
          x^{i_{0}}y^{j_{0}}\left(w\left(z\right)\right).
     \end{equation*}
This is why we begin here by studying the asymptotic of the following integral:
     \begin{equation}
     \label{sp4-integral_general_p}
          \int_{\left(\rho_{j/i}/|\rho_{j/i}|\right)[0,\epsilon]}
          \frac{z^{k}}{x\left(z\right)^{i}
          y\left(z\right)^{j}}\text{d}z,
     \end{equation}
$k$ being some non-negative integer.
Using the equality $1/[x(z)^{i}y(z)^{j}]=\exp(-i\chi_{j/i}(z))$
as well as the expansion~(\ref{sp4-expansion_chi_0})
of $\chi_{j/i}$ at $0$ and then making the change of variable
$z=\rho_{j/i} t$, we obtain that~(\ref{sp4-integral_general_p}) is equal to
     \begin{equation}
     \label{sp4-integral_after_expansion}
          \rho_{j/i}^{k+1}\int_{0}^{\epsilon/|\rho_{j/i}|}
          t^{k}\exp\left(-i t\right)
          \exp\left(-i\sum_{p\geq 1}\nu_{2p+1}\left(j/i\right)
          \left(\rho_{j/i}t\right)^{2p+1}\right)
          \text{d}t.
     \end{equation}

But thanks to~(\ref{sp4-expansion_chi_0}), $|\nu_{2p+1}(j/i)|\leq 4(M+1)$
and therefore for all $t\in[0,\epsilon/|\rho_{j/i}|]$, we have
$|-i\sum_{p=1}^{\infty}\nu_{2p+1}(j/i)(\rho_{j/i}t)^{2p+1}|
\leq i\epsilon^{3}4(M+1)/(1-\epsilon^{2})$. This is why
     \begin{equation*}
          \exp\left(-i\sum_{p\geq 1}\nu_{2p+1}\left(j/i\right)
          \left(\rho_{j/i}t\right)^{2p+1}\right)=
          1+O\left(i\epsilon^{3}\right),
     \end{equation*}
the $O$ above being independent of $j/i\in[0,M]$ and of $t\in[0,\epsilon/|\rho_{j/i}|]$.
The integral~(\ref{sp4-integral_after_expansion}) can thus be calculated as
     \begin{equation*}
          \rho_{j/i}^{k+1}
          \left[1\hspace{-0.5mm}+\hspace{-0.5mm}O\left(i\epsilon^{3}\right)\right]
          \int_{0}^{\epsilon/|\rho_{j/i}|}
          t^{k}\exp(-i t)\text{d}t\hspace{-0.5mm}=\hspace{-0.5mm}
          \left(\rho_{j/i}/i\right)^{k+1}
          \left[1\hspace{-0.5mm}+\hspace{-0.5mm}O\left(i\epsilon^{3}\right)\right]
          \int_{0}^{i\epsilon/|\rho_{j/i}|}t^{k}\exp\left(-t\right)\text{d}t.
     \end{equation*}
In the sequel, we choose $\epsilon=1/i^{3/4}$, so that
$i\epsilon/|\rho_{j/i}|\to \infty$ and
$O(i\epsilon^{3})=O(1/i^{5/4})$.

One could be surprised by this choice of $\epsilon$;
in fact, in the upcoming paragraph~``Conclusion'',
we will see that
in order to obtain the asymptotic of the Green functions along the paths
of states $(i,j)\in\mathbb{Z}_{+}^{2}$ such that $j/i\to\tan(\gamma)\in]0,\infty[$, it would have been
sufficient~to have $O(i\epsilon^{3})=o(1)$, but for the paths $(i,j)\in\mathbb{Z}_{+}^{2}$ such that
$j/i\to 0$, it is necessary to have $O(i\epsilon^{3})=$ $o(1/i)$, what affords
the choice $\epsilon=1/i^{3/4}$.

Finally, we obtain that for this choice of $\epsilon$,
the integral (\ref{sp4-integral_general_p}) is equal to
     \begin{equation}
     \label{sp4-conclusion_integral_general_p}
          \int_{\left(\rho_{j/i}/|\rho_{j/i}|\right)[0,\epsilon]}
          \frac{z^{k}}{x\left(z\right)^{i}
          y\left(z\right)^{j}}\text{d}z=
          \left(\rho_{j/i}/i\right)^{k+1}k!
          \big[1+O\big(1/i^{5/4}\big)\big],
     \end{equation}
where the $O$ is independent of $j/i\in[0,M]$.

We are presently ready to obtain the asymptotic of
the integral~(\ref{sp4-final_formulation_G_i_j})
on the contour $(\rho_{j/i}/|\rho_{j/i}|)[0,\epsilon]$.
First, in accordance with~(\ref{sp4-s_s_expansion_x_i0_y_j0}),
we have that this integral equals
     \begin{equation*}
          \frac{1}{4\pi [p_{1,0} p_{1,-1}]^{1/2} }
          \sum_{p\geq 1} n \left[\kappa_{n p}
          \left(i_{0},j_{0}\right)-\overline{\kappa_{n p}}
          \left(i_{0},j_{0}\right)\right]
          \int_{\left(\rho_{j/i}/|\rho_{j/i}|\right)[0,\epsilon]}
          \frac{z^{n p - 1}}{x\left(z\right)^{i}
          y\left(z\right)^{j}}\text{d}z.
     \end{equation*}

Thus clearly, with~(\ref{sp4-conclusion_integral_general_p}), we
obtain that all the terms corresponding in the sum
above to $p\geq 2$ will be negligible w.r.t.\ the one associated with $p=1$.
In addition, by using the definition (\ref{sp4-def_f_n}) of the
harmonic function $f_{n}$ as well as (\ref{sp4-conclusion_integral_general_p}) for
$k=p n-1$ and $p\geq 1$, we get that the integral~(\ref{sp4-final_formulation_G_i_j})
on the contour $(\rho_{j/i}/|\rho_{j/i}|)[0,\epsilon]$ is equal to
     \begin{equation}
     \label{sp4-conclusion_contribution_neighborhood_0}
          \frac{1}{4\pi [p_{1,0} p_{1,-1}]^{1/2} }\left(-1\right)^{n}
          \left(n-1\right)!f_{n}\left(i_{0},j_{0}\right)
          \imath\left(\rho_{j/i}/i\right)^{n}
          \big[1+O\big(1/i^{5/4}\big)\big].
     \end{equation}

\medskip

\medskip

{\raggedright \textit{\textbf{Contribution of the neighborhood of $\infty$.}}}
The part of the contour close to $\infty$, namely
$(\rho_{j/i}/|\rho_{j/i}|)[1/\epsilon,\infty]$,
is related to the part $(\rho_{j/i}/|\rho_{j/i}|)[0,\epsilon]$
via the transformation $z\mapsto 1/\overline{z}$. Moreover,
it is clear from~(\ref{sp4-uniformization}) that for
$f=x$, $f=y$ or $f=\sum_{w\in W_{n}}(-1)^{l(w)}x^{i_{0}}y^{j_{0}}(w)$,
     \begin{center}
          $f\left(1/\overline{z}\right)=\overline{f\left(z\right)}.$
     \end{center}
Therefore, the change of variable $z\mapsto 1/\overline{z}$
immediately gives us that the contribution of the
integral~(\ref{sp4-final_formulation_G_i_j}) near $\infty$ is the
complex conjugate of its contribution near $0$.

\medskip

\medskip

{\raggedright \textit{\textbf{Contribution of the intermediate part.}}}
Let $A_{\epsilon}$ be the annular domain $\{z\in \mathbb{C}:
\epsilon\leq |z|\leq 1/\epsilon\}$. According to Figure~\ref{sp4-transformation_cycles},
for all $z\in \Lambda(\pi-\pi/n,\pi)\cap A_{\epsilon}$ we have
$|x(z)|>1+\eta_{x,\epsilon}$~and $|y(z)|>1+\eta_{y,\epsilon}$, where $\eta_{x,\epsilon}>0$ and $\eta_{y,\epsilon}>0$.
In fact, since $x'(0)\neq 0$ and $y'(0)\neq 0$, we can take
$\eta_{x,\epsilon}>\eta \epsilon$ and $\eta_{y,\epsilon}>\eta \epsilon$
for some $\eta>0$ independent of $\epsilon$ small enough.

Let us now consider
     \begin{equation*}
          L=\sup_{z\in \Lambda(\pi-\pi/n,\pi)}
          \left|\bigg[\sum_{w\in W_{n}}\left(-1\right)^{l\left(w\right)}
          x^{i_{0}}y^{j_{0}}\left(w\left(z\right)\right)\bigg]\bigg/
          \bigg[x^{i_{0}}y^{j_{0}}\left(z\right)\bigg]\right|,
     \end{equation*}
and let us show that $L$ is finite. For this, it is enough
to prove that the function $s$, defined by $s(z)=\big[\sum_{w\in W_{n}}(-1)^{l(w)}
x^{i_{0}}y^{j_{0}}(w(z))\big]\big/\big[x^{i_{0}}y^{j_{0}}(z)\big]$,
has no pole in $\Lambda(\pi-\pi/n,\pi)$---including $\infty$.
But by using~(\ref{sp4-uniformization}) and~(\ref{sp4-def_automorphisms_C_s}),
we see that the only poles of the numerator of $s$ are the
$z_{0}\exp(2\imath p \pi/n)$ for $p\in\{0,\ldots ,n-1\}$.
Among these $n$ points, only $z_{0}$ is in $\Lambda(\pi-\pi/n,\pi)$.
But in $s$, we have taken care of dividing by $x^{i_{0}}y^{j_{0}}(z)$,
so that $s$ is in fact holomorphic near $z_{0}$. Moreover,
it is easily shown that $s$ is holomorphic at $\infty$.
Finally, we have proven that $s$ has no pole in $\Lambda(\pi-\pi/n,\pi)$,
hence $s$ is bounded in $\Lambda(\pi-\pi/n,\pi)$; in other words, $L$ is finite.

The modulus of the contribution of~(\ref{sp4-final_formulation_G_i_j})
on the intermediate part $(\rho_{j/i}/|\rho_{j/i}|)]\epsilon,1/\epsilon[
\subset \Lambda(\pi-\pi/n,\pi)\cap A_{\epsilon}$
can therefore be bounded from above by
     \begin{equation}
     \label{sp4-upper_bound_intermediate_term}
          \frac{1}{4\pi [p_{1,0} p_{1,-1}]^{1/2} }
          \frac{1}{\epsilon^{2}}
          \frac{L}{(1+\eta\epsilon)^{i-i_{0}}
          (1+\eta\epsilon)^{j-j_{0}}}.
     \end{equation}

Note that the presence of the term $1/\epsilon^{2}$
in~(\ref{sp4-upper_bound_intermediate_term}) is due to the following: one $1/\epsilon$~appears
as an upper bound of the length of the contour, while the other $1/\epsilon$ comes
from an upper bound of the modulus of the term $1/z$ present in
the integrand of~(\ref{sp4-final_formulation_G_i_j}).

Then as before we take $\epsilon=1/i^{3/4}$, and then we use
the following straightforward upper bound, valid for $i$
large enough: $1/[1+\eta/i^{3/4}]^{i}\leq \exp(-[\eta/2]
i^{1/4})$. We finally obtain that for $i$ large enough,
(\ref{sp4-upper_bound_intermediate_term}) is equal to
$O(i^{3/2}\exp(-[\eta/2] i^{1/4}))$.

\medskip

\medskip

{\raggedright \textit{\textbf{Conclusion.}}}
We have seen that the contribution of the integral~(\ref{sp4-final_formulation_G_i_j})
in the neighborhood of $0$ is given by~(\ref{sp4-conclusion_contribution_neighborhood_0}),
that the contribution of~(\ref{sp4-final_formulation_G_i_j}) in the neighborhood of
$\infty$ is equal to the complex conjugate of~(\ref{sp4-conclusion_contribution_neighborhood_0})
and that the contribution of the residual part can be written as
$O(i^{3/2}\exp(-[\eta/2] i^{1/4})$. Therefore, with~(\ref{sp4-final_formulation_G_i_j})
and~(\ref{sp4-conclusion_contribution_neighborhood_0}), we obtain that
     \begin{equation}
     \label{sp4-btc}
          G_{i,j}=\frac{1}{4\pi [p_{1,0} p_{1,-1}]^{1/2} }\left(-1\right)^{n}
          \left(n-1\right)!f_{n}\left(i_{0},j_{0}\right)\imath
          \left[\left(\rho_{j/i}/i\right)^{n}-
          \left(\overline{\rho_{j/i}}/i\right)^{n}\right]
          +O\big(1/i^{n+5/4}\big).
     \end{equation}

Moreover, starting from~(\ref{sp4-def_rho_j/i}), we easily derive
     \begin{equation*}
          \left(\rho_{j/i}/i\right)^{n}-
          \left(\overline{\rho_{j/i}}/i\right)^{n}=
          \frac{2\imath\left(-1\right)^{n+1}}{4^{n}}
          \frac{
          \sin\big(n\arctan\big(\frac{j/i}{1+j/i}
          \tan\left(\pi/n\right)\big)\big)
          }
          {
          \big[\cos(\pi/n)^{2}\left(i^{2}
          +2 i j\right)+j^{2}\big]^{n/2}
          }.
     \end{equation*}

The latter equality,~(\ref{sp4-btc}) and Remark~\ref{sp4-rem_Nn} conclude
the proof of Theorem~\ref{sp4-Main_theorem_Green_functions} in the case of
$\gamma\in[0,\pi/2[$.

\begin{enumerate}

\item[$\ast$] Note that having
$o(1/i^{n})$ instead of $O(1/i^{n+5/4})$ would have been sufficient
for~$\gamma\in]0,\pi/2[$, since in this case, Remark~\ref{sp4-rem_Nn}
implies that $(\rho_{j/i}/i)^{n}-(\overline{\rho_{j/i}}/i)^{n}\sim K_{\gamma}/i^{n}$
with $K_{\gamma}\neq 0$.

\item[$\ast$] On the other hand, if $\gamma=0$ then
$(\rho_{j/i}/i)^{n}-(\overline{\rho_{j/i}}/i)^{n}\sim K_{0}j/i^{n+1}$
with $K_{0}\neq 0$ and it is necessary to have
something like $o(1/i^{n+1})$ in~(\ref{sp4-btc}), as it is actually the case with $O(1/i^{n+5/4})$.

\end{enumerate}

To prove Theorem~\ref{sp4-Main_theorem_Green_functions}
in the case $\gamma=\pi/2$, we would
consider $(i/j)\kappa_{j/i}$ rather than $\kappa_{j/i}$
and we would then use exactly the same analysis;
we omit the details.

\medskip

\medskip

{\raggedright \textit{\textbf{Acknowledgments.}}}
I would like to thank Irina Ignatiouk-Robert and Rodolphe~Garbit for
stimulating discussions we had together
concerning the topic of this article.
I also warmly thank Irina Kurkova who introduced me to this field of
research and also for her constant help and support during the elaboration
of this project.

\footnotesize

\bibliographystyle{plain}

\end{document}